\numberwithin{equation}{subsection}
\theoremstyle{definition}
\newtheorem{defi}{Definition}[section]
\newtheorem{lem}[defi]{Lemma}
\newtheorem{theo}[defi]{Theorem}
\newtheorem{remark}[defi]{Remark}
\newtheorem{conj}[defi]{Conjecture}
\newcommand{\initial}{\mathrm{in}}
\newcommand{\join}{\vee}
\newcommand{\meet}{\wedge}
\title{
New examples of radical join-meet ideals
}
\author{Yohei Oshida}
\address{Graduate School of Engineering and Science , Shibaura Institute of Technology, 307 Minumaku Fukasaku, Saitama-City 337-8570, Japan.}
\email{mf20022@shibaura-it.ac.jp}
\keywords{Finite lattices, Non-distributive non-modular lattices, Join-meet ideal, Radical ideal, Compatible monomial order, Gr\"{o}bner bases}
\begin{document}

\begin{abstract}
We give new examples of a finite lattice $L$ such that the join-meet ideal $I_{L}$ is radical. 
\end{abstract}

\maketitle{}

\setcounter{tocdepth}{1}
\tableofcontents

\section{Introduction}
Let $L$ be a finite lattice and $K[L]$ be the polynomial ring over a field $K$ whose variables are the elements of $L$. The ideal
\begin{eqnarray}
I_L=(\{ f_{a,b}:=x_a x_b - x_{a \join b} x_{a \meet b} \mid a , b \in L \}) \subset K[L] \nonumber
\end{eqnarray}
is called the join-meet ideal of $L$. It was introduced in 1987 by Hibi in \cite{3}. As shown by \cite{1} or \cite{3}, $L$ is distributive if and only if $I_{L}$ is a prime ideal. It follows from this result that $I_L$ is radical when $L$ is distributive. However, it is not yet completely known classes of non-distributive lattice L with the property that $I_L$ is a radical ideal. On the other hand, for instance, it followed from \cite{1} and \cite{2} that there are some examples of non-distributive modular lattice such that $I_L$ is a radical ideal.

Not all, here we briefly introduce three examples. First, The join-meet ideal of the pentagon lattice $N_5$ and diamond lattice $M_5$ is radical; see [1, Page 157] for detail. Second, for some integer $n \geq 1$, it exists a class of the distributive lattice of the divisors of  $2 \cdot 3^n$ such that by including just one small diamond one get a radical join-meet ideal for the new lattice; see [2, Section 3] for detail.

In this paper, we introduce two new examples of non-distributive lattices $L$ such that the join-meet ideal $I_{L}$ is radical. Let $k$ be non-negative integer with $k >0$. For non-negative integer $n_1,\cdots,n_k \geq 1$, let 
\begin{eqnarray}
L_k(n_1,\cdots,n_k)=\{ s , a_{1,1} , \cdots , a_{1,n_1} , a_{2,1} , \cdots , a_{2,n_2} , \cdots , a_{k,1} , \cdots, a_{k,n_k} , t \} \nonumber 
\end{eqnarray}
be a finite lattice with $s < a_{i,1} < a_{i,2} < \cdots < a_{i,n_i} < t$ for $i=1,\cdots,k$. In terms of appearance of the Hasse diagram, A finite lattice $L_k(n_1,\cdots,n_k)$ looks like stretched vertically and horizontally of a finite lattice $\{ \xi , a , b , c , d , \zeta \}$ introduced in [1, Problems 6.13]. Then, the following question arises. Is the join-meet ideal $I_{L_k(n_1,\cdots,n_k)}$ radical ? Unfortunately, I couldn't answer this question. On the other hand, for $ k = 2,3 $, we obtained the following theorem.

\begin{theo}\it \label{theo1}The join-meet ideal $I_{L_2(n_1,n_2)}$ is radical.
\end{theo}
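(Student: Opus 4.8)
The plan is to fix a compatible monomial order $<$ on $K[L_2(n_1,n_2)]$, compute a Gr\"obner basis of $I_{L_2(n_1,n_2)}$ with respect to $<$, and then check that the initial ideal $\initial_<(I_{L_2(n_1,n_2)})$ is generated by squarefree monomials. Radicality follows from the standard fact that a polynomial ideal possessing, for some term order, a squarefree initial ideal is automatically radical (see, e.g., \cite{1}): the Gr\"obner degeneration of $K[L]/I_{L}$ to $K[L]/\initial_<(I_{L})$ is flat, and since the reduced locus is open in a flat family, reducedness of the special fibre $K[L]/\initial_<(I_{L})$ forces reducedness of the general fibre $K[L]/I_{L}$.

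First I would cut down the generating set. For comparable $a<b$ one has $f_{a,b}=0$, so only incomparable pairs contribute, and in $L_2(n_1,n_2)$ the incomparable pairs are exactly the cross-chain pairs $a_{1,i},a_{2,j}$, for which $a_{1,i}\join a_{2,j}=t$ and $a_{1,i}\meet a_{2,j}=s$. Hence
\[
I_{L_2(n_1,n_2)}=\bigl(\,x_{a_{1,i}}x_{a_{2,j}}-x_s x_t \mid 1\le i\le n_1,\ 1\le j\le n_2\,\bigr),
\]
and a compatible order (for instance a lexicographic order in which every $x_{a_{1,i}}$ and $x_{a_{2,j}}$ precedes $x_s$ and $x_t$) gives $\initial_<(f_{a_{1,i},a_{2,j}})=x_{a_{1,i}}x_{a_{2,j}}$.

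Next I would run Buchberger's algorithm. The $S$-polynomial of $f_{a_{1,i},a_{2,j}}$ and $f_{a_{1,i'},a_{2,j}}$ simplifies to $x_s x_t\,(x_{a_{1,i}}-x_{a_{1,i'}})$, and symmetrically the pair $f_{a_{1,i},a_{2,j}},\,f_{a_{1,i},a_{2,j'}}$ yields $x_s x_t\,(x_{a_{2,j}}-x_{a_{2,j'}})$; these have the squarefree leading terms $x_s x_t x_{a_{1,i}}$ and $x_s x_t x_{a_{2,j}}$. Adjoining the consecutive binomials $x_s x_t(x_{a_{1,i}}-x_{a_{1,i+1}})$ and $x_s x_t(x_{a_{2,j}}-x_{a_{2,j+1}})$, I would then verify that every remaining $S$-polynomial reduces to zero: $S$-polynomials built from two cross-chain generators with all four indices distinct, and those involving the new binomials, all collapse after the substitution $x_{a_{1,i}}x_{a_{2,j}}\mapsto x_s x_t$, while pairs with coprime leading terms are disposed of by Buchberger's first criterion. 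Consequently the displayed set is a Gr\"obner basis and
\[
\initial_<(I_{L_2(n_1,n_2)})=\bigl(\,x_{a_{1,i}}x_{a_{2,j}},\ x_s x_t x_{a_{1,i}},\ x_s x_t x_{a_{2,j}}\,\bigr),
\]
which is squarefree.

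The main obstacle is the completeness check in this Gr\"obner basis computation: one must confirm that adjoining the binomials $x_s x_t(x_{a_{1,i}}-x_{a_{1,i'}})$ and $x_s x_t(x_{a_{2,j}}-x_{a_{2,j'}})$ never forces a further generator with a non-squarefree leading term (such as $x_s^2 x_t^2$). The book-keeping is the delicate point, since the $S$-polynomials among these new binomials produce expressions of the form $x_s x_t(x_{a_{1,i}}x_{a_{2,j'}}-x_{a_{1,i'}}x_{a_{2,j}})$ that must be shown to reduce to zero through the cross-chain relations; once this telescoping is verified, the squarefreeness of $\initial_<(I_{L_2(n_1,n_2)})$, and hence the radicality of $I_{L_2(n_1,n_2)}$, is immediate.
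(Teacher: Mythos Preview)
Your proposal follows essentially the same route as the paper: choose a term order making $a_ib_j$ the leading monomial of each $f_{a_{1,i},a_{2,j}}=a_ib_j-st$, augment the generating set by the $S$-polynomial byproducts $st(a_i-a_{i'})$ and $st(b_j-b_{j'})$, check via Buchberger's criterion that the enlarged set is a Gr\"obner basis, and conclude that the initial ideal is squarefree. The only cosmetic differences are that the paper normalises these extra binomials against $a_1$ and $b_1$ (writing $a_ist-a_1st$, $b_jst-b_1st$) rather than taking consecutive differences, and carries out every $S$-polynomial reduction explicitly instead of leaving the ``completeness check'' as the acknowledged obstacle.
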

\begin{theo}\it \label{theo2}For
\begin{eqnarray}
(n_1,n_2,n_3)&=&(k_1,1,1), \quad 2 \leq k_1 \leq 10, \nonumber \\
&&(k_2,k_3,1), \quad 2 \leq k_2 \leq 4,2 \leq k_3 \leq 4, \nonumber \\
&&(k_4,k_5,k_6), \quad 2 \leq k_4 , k_5 , k_6 \leq 3, \nonumber
\end{eqnarray}
the join-meet idea $I_{L_3(n_1,n_2,n_3)}$ is radical.
\end{theo}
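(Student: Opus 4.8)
The plan is to establish radicality by a Gr\"obner degeneration. I would use the standard fact that if there is a monomial order $<$ for which $\initial_<(I_{L_3(n_1,n_2,n_3)})$ is generated by squarefree monomials, then $I_{L_3(n_1,n_2,n_3)}$ is radical. Indeed, a squarefree monomial ideal is radical, and for any ideal $I$ one has $\initial_<(I)\subseteq\initial_<(\sqrt{I})\subseteq\sqrt{\initial_<(I)}$; when $\initial_<(I)$ is radical the outer terms coincide, forcing $\initial_<(I)=\initial_<(\sqrt{I})$, and equality of Hilbert functions together with $I\subseteq\sqrt{I}$ then yields $I=\sqrt{I}$. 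Thus the whole problem reduces to exhibiting a monomial order whose initial ideal is squarefree.

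First I would fix a compatible monomial order, i.e.\ one with $\initial_<(f_{a,b})=x_a x_b$ for every pair of incomparable elements $a,b$. For $L=L_3(n_1,n_2,n_3)$ such an order is immediate: assign weight $0$ to $x_s$ and $x_t$ and weight $1$ to every chain variable, so that the antichain product $x_{i,p}x_{j,q}$ (weight $2$) beats $x_s x_t$ (weight $0$), and refine by any term order to break later ties. The decisive simplification is that incomparability in $L$ means exactly ``lying in distinct chains,'' and for such a pair the join is $t$ and the meet is $s$. Hence all defining generators take the uniform shape $f_{(i,p),(j,q)}=x_{i,p}x_{j,q}-x_s x_t$ with $i\neq j$, and every leading term $x_{i,p}x_{j,q}$ is already squarefree.

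Next I would run Buchberger's algorithm to pin down the reduced Gr\"obner basis. The key phenomenon is that two generators sharing a variable, say $f_{(i,p),(j,q)}$ and $f_{(i,p),(j',q')}$, produce an S\nobreakdash-polynomial that reduces to $\pm\, x_s x_t\,(x_{j,q}-x_{j',q'})$; since $s$ and $t$ are comparable to everything, this degree-three binomial admits no further reduction, so the basis picks up elements of the form $x_s x_t\,(x_a-x_b)$ whose leading monomials $x_s x_t x_a$ are again squarefree. The plan is then to show, for each listed triple $(n_1,n_2,n_3)$, that iterating this process terminates with a basis all of whose leading monomials are squarefree, and to conclude by the reduction above.

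The hard part is controlling this iteration. Unlike the two-chain setting of Theorem~\ref{theo1}, where the extra binomials fall into one transparent family and can be handled uniformly in $n_1,n_2$, the three-chain case spawns further families from the S\nobreakdash-pairs among the new degree-three elements and between them and the original generators, and I do not see how to organize their combinatorics into a closed form valid for all $(n_1,n_2,n_3)$. For this reason I would settle the theorem only for the finite list of parameters stated, verifying in each instance --- by hand for the smallest triples and by an explicit Gr\"obner basis computation otherwise --- that the reduced Gr\"obner basis closes up (every S\nobreakdash-pair reduces to zero) and that each leading monomial is squarefree. The genuine obstacle, and the reason the general three-chain question is left open, is proving completeness and squarefreeness of the basis \emph{simultaneously} once all three chains are allowed to be long.
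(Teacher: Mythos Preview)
Your reduction ``squarefree initial ideal $\Rightarrow$ radical'' is correct, and your description of the first layer of $S$-polynomials is accurate. However, the plan has a genuine gap at the decisive step, and the paper in fact takes a completely different route precisely because of it.

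The paper does \emph{not} prove Theorem~\ref{theo2} via a squarefree initial ideal. Instead it computes, for each listed triple $(n_1,n_2,n_3)$, the primary decomposition of $I_{L_3(n_1,n_2,n_3)}$ with Risa/Asir and observes that every primary component is already prime (the components are the monomial primes $X_{n,m},Y_{m,r},Z_{n,r}$ together with the binomial prime $E_{n,m,r}$), so the ideal equals its own radical. No monomial order or Gr\"obner basis enters the argument.

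The reason is stated explicitly in Section~3: the author reports that, despite extensive computer search, for $k=3$ and $(n_1,n_2,n_3)\neq(1,1,1)$ no monomial order $\prec'$ with $\initial_{\prec'}(I_{L_3(n_1,n_2,n_3)})$ radical could be found; the existence of such an order is left as Conjecture~3.2, and the author even suggests it may be false. Thus your concluding step --- ``verify in each instance that the reduced Gr\"obner basis has only squarefree leading monomials'' --- is exactly the point where the argument is expected to break down. The first round of $S$-pairs behaves as you describe, but the subsequent $S$-pairs among the degree-three elements $x_sx_t(x_a-x_b)$ and between these and the original generators, in the presence of a third chain, apparently generate leading terms that are not squarefree regardless of how the order is refined. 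Without an actual order and an actual computation exhibiting squarefree leading terms, your proposal is a strategy rather than a proof, and the available evidence indicates that the strategy does not succeed for these lattices.
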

By using new examples $L_2(n_1,n_2)$, we obtained new non-distributive non-modular lattices $L_2(n_1,n_2)[k',i_1,i_2]$ for non-negative integer $k',n_1,n_2$ satisfying certain conditions. We also obtained new distribtuive lattices  $O_{n_1}$. Then, we obtained the following theorem.

\begin{theo}\label{theo4}\it
The join-meet ideal $I_{L_2(n_1,n_2)[{k'},i_1,i_2]}$ is radical.
\end{theo}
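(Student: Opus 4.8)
The plan is to prove radicality by exhibiting a monomial order for which the initial ideal of $I_{L} := I_{L_2(n_1,n_2)[k',i_1,i_2]}$ is squarefree, and then to invoke the standard criterion: if $<$ is a monomial order and $\initial_{<}(I)$ is a radical (equivalently, squarefree) monomial ideal, then $I$ is itself radical. Since $I_L$ is homogeneous, this criterion follows from the chain $\initial_{<}(I_L) \subseteq \initial_{<}(\sqrt{I_L}) \subseteq \sqrt{\initial_{<}(I_L)}$, where squarefreeness of the first term forces all three to coincide, so that $K[L]/I_L$ and $K[L]/\sqrt{I_L}$ share the same Hilbert function and hence $I_L = \sqrt{I_L}$. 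Accordingly the whole argument reduces to three tasks: fixing a compatible monomial order on $K[L_2(n_1,n_2)[k',i_1,i_2]]$, producing a Gr\"obner basis of $I_L$, and checking that every leading term of that basis is squarefree.

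First I would record the explicit Hasse diagram of $L$: its elements, its incomparable pairs, and the join and meet of each such pair. By construction $L$ is obtained by attaching the distributive lattice $O_{n_1}$ to the non-distributive core $L_2(n_1,n_2)$ along the region determined by the indices $i_1,i_2$. I would then choose a compatible term order $<$, that is, one for which $\initial_{<}(f_{a,b}) = x_a x_b$ for every incomparable pair $a,b$; such an order exists because a rank function or linear extension of $L$ provides a weighting that separates the ``incomparable'' monomial $x_a x_b$ from $x_{a\join b}\, x_{a\meet b}$. These squarefree monomials $x_a x_b$, ranging over incomparable pairs, are the expected initial terms, and the real question is whether the binomials $f_{a,b}$ already form a Gr\"obner basis or whether additional elements must be adjoined.

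The heart of the proof is the Gr\"obner-basis computation, which I would organize according to the gluing. Because $O_{n_1}$ is distributive, the binomials coming from pairs both lying in the attached part reduce among themselves exactly as in Hibi's distributive case, contributing only squarefree initial terms. Likewise, by Theorem~\ref{theo1} the part supported on the core $L_2(n_1,n_2)$ already has a squarefree initial ideal, so I can import from its proof the finite list of ``correction'' binomials that must be adjoined to the $f_{a,b}$ to obtain a Gr\"obner basis of $I_{L_2(n_1,n_2)}$. The genuinely new computation concerns the S-polynomials of \emph{cross pairs}, where one factor lies in the glued distributive lattice and the other in the non-distributive core. I would show that each such S-polynomial reduces, modulo the binomials already collected, either to $0$ or to a further binomial whose leading monomial is again squarefree. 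The main obstacle is exactly this cross-term bookkeeping: the attaching region can create new incomparabilities whose joins and meets land in the non-distributive part of $L$, and one must verify that no reduction ever produces a leading term containing a repeated variable.

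Finally, assembling the three families of binomials, I would confirm via Buchberger's criterion that they constitute a Gr\"obner basis of $I_L$ and that the resulting initial ideal is generated by squarefree monomials. To keep the cross-term analysis finite and uniform, I would run an induction on the parameter $k'$, peeling off the distributive layers of $O_{n_1}$ one at a time and checking that each step preserves squarefreeness of $\initial_{<}(I_L)$; the base case is supplied by Theorem~\ref{theo1}. By the criterion recalled at the outset, squarefreeness of $\initial_{<}(I_L)$ then yields that $I_{L_2(n_1,n_2)[k',i_1,i_2]}$ is radical.
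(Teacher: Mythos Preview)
Your proposal rests on a misreading of the lattice $L_2(n_1,n_2)[k',i_1,i_2]$. It is \emph{not} obtained by attaching the distributive lattice $O_{n_1}$ to anything; $O_{n_1}$ is an entirely separate construction treated in a later subsection and plays no role here. Rather, $L_2(n_1,n_2)[k',i_1,i_2]$ arises from the two-chain lattice $L_2(n_1,n_2)$ by adjoining the two relations $a_{i_1} < b_{k'}$ and $b_{k'} < a_{i_2}$ for a \emph{single} index $k'$. The element $b_{k'}$ thus sits between $a_{i_1}$ and $a_{i_2}$, and this cuts the lattice into three interval sublattices, each again of the shape $L_2(\cdot,\cdot)$: namely $L_{i_1,k'}=\{s,a_1,\dots,a_{i_1},b_1,\dots,b_{k'}\}$, $L_{i_1,i_2,k'}=\{a_{i_1},\dots,a_{i_2},b_{k'}\}$, and $L_{i_2,k'}=\{b_{k'},a_{i_2},\dots,a_{n_1},b_{k'+1},\dots,b_{n_2},t\}$. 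The paper applies Theorem~\ref{theo1} to each of these pieces to import the local Gr\"obner data (the sets $G_i$, $A_i$, $B_i$), then checks the handful of cross $S$-polynomials between the pieces explicitly, and concludes that the initial ideal with respect to the order~(\ref{prime order}) is squarefree.

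Because your structural picture is wrong, both your proposed decomposition (a distributive $O_{n_1}$-layer glued to a non-distributive core) and your inductive scheme on $k'$ have no footing: $k'$ is a fixed index, not a running parameter one can peel off in layers, and there is no distributive piece whose $S$-polynomial analysis could be borrowed from Hibi's theory. Your high-level strategy---exhibit a monomial order with squarefree initial ideal and invoke the standard radicality criterion---is exactly the right one and coincides with the paper's, but the concrete Gr\"obner computation you outline would not go through as written. You would need to restart from the correct Hasse diagram and organize the Buchberger analysis around the three-$L_2$ decomposition above.
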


\begin{theo}\label{theo3}\text{\it The join-meet ideal $I_{O_{n}}$ is radical.}
\end{theo}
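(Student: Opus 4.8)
The plan is to reduce the statement to a purely lattice-theoretic fact. As recalled in the introduction, \cite{1} (equivalently \cite{3}) shows that a finite lattice $L$ is distributive if and only if $I_{L}$ is prime, and every prime ideal is radical. Since $O_{n}$ is a distributive lattice, it follows at once that $I_{O_{n}}$ is prime and hence radical. Thus the only thing that genuinely requires argument is the distributivity of $O_{n}$; once that is in hand the theorem is immediate and no Gr\"{o}bner basis computation is needed, in contrast with the non-distributive families of Theorems \ref{theo1} and \ref{theo2}.

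To verify distributivity I would appeal to the Birkhoff--Dedekind criterion: a finite lattice is distributive precisely when it contains no sublattice isomorphic to the pentagon $N_{5}$ or to the diamond $M_{3}$ (the five-element lattice with three atoms). Writing the covering relations of $O_{n}$ explicitly, I would determine all comparabilities together with the values of the corresponding joins and meets, and then rule out each forbidden configuration: three pairwise incomparable elements in $O_{n}$ must have a common join and a common meet that already lie in the lattice in a way incompatible with the $M_{3}$ pattern, while any three-element chain together with one further incomparable element forces, in $O_{n}$, a comparability that destroys the $N_{5}$ shape. If instead $O_{n}$ is presented as the lattice of order ideals of an explicit poset of its join-irreducibles, distributivity is automatic by Birkhoff's representation theorem, and it suffices to identify that poset.

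The main obstacle is uniformity in the parameter: unlike the finite case-checks behind Theorem \ref{theo2}, here the forbidden sublattices must be excluded for the entire infinite family $O_{n}$ simultaneously. The cleanest route, I expect, is to exhibit an order-embedding of $O_{n}$ into a product of chains under which join and meet are computed coordinatewise; then the distributive identity $x \meet (y \join z) = (x \meet y) \join (x \meet z)$ holds coordinatewise because each chain factor is a distributive lattice, and it transfers to $O_{n}$ provided the image is closed under the componentwise operations. The delicate step is precisely this last verification --- that the embedding really carries the joins and meets of $O_{n}$ to the product operations --- and it is here that the explicit combinatorial description of $O_{n}$ must be used in full.
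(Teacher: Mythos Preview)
Your reduction to distributivity is exactly right, and the paper's proof terminates at the same point: once $O_{n}$ is known to be distributive, radicality follows from \cite{1} (or \cite{3}). Where you diverge is in \emph{how} distributivity is established. You propose a purely lattice-theoretic route (exclude $N_{5}$ and $M_{3}$, or embed in a product of chains), whereas the paper runs the implication in the opposite direction: it verifies via Buchberger's criterion that the generating binomials $\{f_{a_i,b_j}\}$ already form a Gr\"obner basis of $I_{O_n}$ with respect to the reverse-lexicographic order induced by~(\ref{prime order}), and then invokes [1,~Theorem~6.17], which states that this Gr\"obner property for a compatible monomial order characterises distributivity. The author even comments (Section~4.3) that a direct Hasse-diagram argument is ``surprisingly difficult'' to write out uniformly in~$n$, while the $S$-polynomial reductions are mechanical. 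Your product-of-chains idea is in fact vindicated later in the paper---Theorem~\ref{EEEEEE} realises $O_{2k}$ as a divisor lattice, which yields distributivity for free---but that result is placed \emph{after} Theorem~\ref{theo3} and is not used in its proof. Both approaches are valid; yours is conceptually cleaner and avoids all $S$-polynomial bookkeeping, but leaves the uniform combinatorial verification to be carried out, while the paper's is computational yet entirely routine once the generators are written down.
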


We checked that  Theorem \ref{theo4} and \ref{theo3} are similar to [2, Theorem 3.3] in terms of the opposite approach. Unfortunately, since $O_{n_1}$ is a distributive lattice, note that it not new exmaples. The detailed definition of $L_2(n_1,n_2)[k',i_1,i_2]$ and $O_{n_1}$ is given in section 4.

This paper is organized as follows. In section 2, we introduce the proof of Theorem \ref{theo1} and \ref{theo2}. In section 3, we introduce the conjectures that occur naturally by Theorem \ref{theo1} and $\ref{theo2}$. Then, we give some thoughts. In section 4, we introduce the proof of Theorem \ref{theo4} and \ref{theo3}. In section 5, we introduce topics related to $O_ {n_1}$. The keywords of it are number theory and gorenstein ring. Note that it has little to do with the gist of this paper.\\

Below, unless otherwise noted, in order to avoid the complexity of notation, we denote 
\begin{eqnarray}
a_{1,i}=a_i \quad \text{for $1 \leq i \leq n_1$}, \quad a_{2,i}=b_i \quad \text{for $1 \leq i \leq n_2$}, \quad a_{3,i}=c_i \quad \text{for $1 \leq i \leq n_3$}. \nonumber
\end{eqnarray}
Furthermore, in order to match the logical calculation, let $a_i$, $b_i$ and $c_i$ satisfy
\begin{eqnarray}
&&a_i,b_i,c_i=s \quad \text{for $i \leq 0$}, \nonumber \\
&&a_i=t \quad \text{for $n_1+1 \leq i$}, \quad b_i=t \quad \text{for $n_2+1 \leq i$}, \quad c_i=t \quad \text{for $n_3+1 \leq i$}. \nonumber
\end{eqnarray}

\section{The proof of theorem \ref{theo1} and theorem \ref{theo2}}

In this section, we introduce the proof of theorem \ref{theo1} and theorem \ref{theo2}. Hereafter, in order to avoid the complexity of notation, let $n_1 =n$, $n_2=m$, $n_3=r$.

\subsection{The proof of Theorem \ref{theo1}}

Let
\begin{eqnarray}
G(n,m)&=&\{ f_{a,b}=ab-st \mid a , b \in L_2(n,m) \}, \nonumber \\
A_n&=&\{ a_i s t - a_1 s t \mid 1 < i \leq n \}, \nonumber \\
B_m&=&
\begin{cases}
\emptyset, & m=1, \\
\{ b_i s t - b_1 s t \mid 1 < i \leq m \}, & m>1.
\end{cases}
\nonumber
\end{eqnarray}

The outline of the proof of Theorem \ref{theo1} is to show that $I_{L_2(n,m)}$ is squarefree with respect to the inverse lexicographic order induced by
\begin{eqnarray}\label{prime order}
s \prec a_1 \prec \cdots \prec a_n \prec b_1 \prec \cdots \prec b_m \prec c_1 \prec \cdots \prec c_r \prec t.
\end{eqnarray}
To prove this claim, for $n>1$, we show that the set $G_{n,m} \cup A_{n} \cup B_{m}$ is a Gr\"{o}bner basis of $I_{L_2(n,m)}$ with repsect to $\prec$. Below, by using \text{\bf{Buchberger's criterion}}, we show each case when $m= 1$ and when it is not. Note that it is clearly that $I_{L_ 2(1,1)}$ is radical; see [1, Theorem 6.10(Dedekind)] and [1, Theorem 6.21].

\subsubsection{The case $m=1$}

First, for $u$ and $v$ belonging to $G_{n,1}$, we show that the $S$-polynomial $S(u,v)$ reduces to $0$. Let $i$ and $j$ be non-negative integer with $1 \leq i,j \leq n $. Let $u_{i,j}$ denote the $S$-polynomial $S(a_i b_1 - s t , a_j b_1 - s t)$. If $i=j$, then we have $u_{i,j}=u_{i,i}=0$. On the other hand, if $i \neq j$, we have
\begin{eqnarray}\label{m,1,first}
u_{i,j}=a_j ( a_i b_1 - s t ) - a_i ( a_j b_1 - s t )=- a_j s t + a_i s t.
\end{eqnarray}
Thus, computational result of $u_ {i, j} (i \neq j)$ is as Table 1. Hence, we showed that $S(u,v)$ reduces to $0$.\\
\begin{table}[h]
 \caption{Computational result of $u_ {i, j}(i \neq j)$}
 \label{table1}
 \centering
  \begin{tabular}{|c|c|c|c|c|}
   \hline
   Value of $i$ & Value of $j$ & A standard expression of $u_{i,j}(i \neq j)$ \\
   \hline \hline
   $i=1$ & $j=1$ & 0 \\
   \cline{2-3}
   & $j > 1$ & $- (a_j s t - a_1 s t)$ \\
   \hline
   $i > 1$ & $j=1$ & $a_i s t - a_1 s t$ \\
   \cline{2-3}
   & $j > 1$ & $(a_i s t - a_1 s t ) - (a_j s t - a_1 s t)$ \\
   \hline
  \end{tabular}
\end{table}

Second, for $u$ and $v$ belonging to $\in A_{n}$, we show that $S(u,v)$ reduces to $0$. Let $i$ and $j$ be non-negative integer with $2 \leq i,j \leq n $. Let $u_{i,j}$ denote the $S$-polynomial $S(a_i s t - a_1 s t , a_j s t - a_1 s t)$. If $i=j$, then we have $u_{i,j}=u_{i,i}=0$. On the other hand, if $i \neq j$, then we have 
\begin{eqnarray}
u_{i,j}=a_j ( a_i s t - a_1 s t ) - a_i ( a_j s t - a_1 s t )=- a_j a_1 s t + a_i a_1 s t=a_1 (a_i s t - a_1 s t) - a_1 (a_j s t - a_1 s t). \nonumber
\end{eqnarray}
Hence, we showed that the $S(u,v)$ reduces to $0$.\\

Finally, for $(u,v)$ belonging to $G_{n,1} \times A_n$, we show that $S(u,v)$ reduces to $0$. Let $i$ and $j$ be non-negative integer with $1 \leq i \leq n $, $1 < j \leq n$. Let $u_{i,j}$ denote the $S$-polynomial $S(a_i b_1 - s t , a_j s t - a_1 s t)$. If $i=j$, then we have
\begin{eqnarray}
u_{i,i}=s t ( a_i b_1 - s t ) - b_1 ( a_i s t - a_1 s t )=a_1 b_1 st - s^2 t^2=st (a_1 b_1 - st). \nonumber
\end{eqnarray}
On the ohter hand, if $i \neq j$, then we have
\begin{eqnarray}\label{n,1,0,0,ag2}
u_{i,j}&=&a_j s t ( a_i b_1 - s t ) - a_i b_1 ( a_j s t - a_1 s t ) \nonumber \\
&=&- a_j s^2 t^2 + a_i a_1 b_1 st \nonumber \\
&=&a_1 st (a_i b_1 - s t) - s t (a_j s t - a_1 s t). \nonumber
\end{eqnarray}
Hence, we showed that $S(u,v)$ reduces to $0$.\\

Therefore, We showed that the set $G_{n,m} \cup A_{n} \cup B_{1}$ is a Gr\"{o}bner basis of $I_{L_2(n,1)}$ with repsect to $\prec$.\\

\subsubsection{The case $m>1$}

First, for $u$ and $v$ belonging $G_{n,m}$, we show that the $S$-polynomial $S(u,v)$ reduces to $0$. Let $i$, $j$, $k$ and $r$ be non-negative integer with $1 \leq i \leq n$, $1 \leq j \leq m$, $1 \leq k, r \leq m$. Let $u_{i,j,k,r}$ denote the $S$-polynomial $S(a_i b_j - s t , a_k b_r - s t)$. If $i=k$, then we have $u_{i,j,i,r}=- b_r st + b_j st$. Thus, computational result of $u_ {i,j,i,r}$ from table 2. Hence, $ u_ {i,j,i,r}$ reduces to $0$.\\

\begin{table}[h]
 \caption{Computational result of $u_ {i,j,i,r}$}
 \label{table2}
 \centering
  \begin{tabular}{|c|c|c|c|c|}
   \hline
   Value of $j$ & Value of $r$ & A standard expression of $u_ {i,j,i,r}$ \\
   \hline \hline
   $j = 1$ & $r=1$ & $0$ \\
   \cline{2-3}
   & $r > 1$ & $-(b_r s t - b_1 s t)$ \\
   \hline
   $j > 1$ & $r=1$ & $b_j s t - b_1 s t$ \\
   \cline{2-3}
   & $r > 1$ & $(b_j s t - b_1 s t) - (b_r s t - b_1 s t)$ \\
   \hline
  \end{tabular}
\end{table}
On the other hand, for $j=r$, it follows that $ u_ {i,j,i,r}$ reduces to $0$  by rewriting $b_1$ to $b_j$ in $(\ref{m,1,first})$  and using table $\ref{table1}$. Therefore, we showed that $S(u,v)$ reduces to $0$.\\

Second, for $u$ and $v$ belonging to $A_n \cup B_m$, we show that $S(u,v)$ reduces to $0$. Since $S(u,v) (u,v \in A_n)$ and $S(u,v) (u,v \in B_m)$ reduce $to$ 0 from the discussion in case $m=1$, it suffices to prove that $S(u,v)$ reduces to $0$, where $(u,v) \in A_n \times B_m$.\\

Now, let $i$ and $j$ be non-negative integer with $1 < i \leq n ,1 < j \leq m$. Let $u_{i,j}$ denote the $S$-polynomial $S(a_i st - a_1 st , b_j st - b_1 st)$. Then, the polynomial $u_{i,j}$ is computed as follows:
\begin{eqnarray}
u_{i,j}=b_j ( a_i s t - a_1 s t ) - a_i ( b_j s t - b_1 s t )=- a_1 b_j s t + a_i b_1 s t=s t (a_i b_1 - s t) - s t (a_1 b_j - s t).  \nonumber
\end{eqnarray}
Hence, we showed that $S(u,v)$ reduces to $0$.\\

Finally, for $(u,v)$ belonging to $G_{n,m} \times A_n \cup B_m$, we show that $S(u,v)$ reduces to $0$. Let $i$, $j$, $k$ and $r$ be non-negative integer with $1 \leq i \leq n$, $1 \leq j, \leq m$, $1 < k \leq n$, $1 < r \leq m$. Let $u_{i,j,k}$ be the $S$-polynomial $S(a_i b_j - st , a_k st - a_1 st)$ and  $u_{i,j,r}$ the $S$-polynomial $S(a_i b_j - st , b_k st - b_1 st)$.\\

At first, about computational result of $u_{i,j,k}$, if $k=i>1$, then we have
\begin{eqnarray}
u_{i,j,k}=s t (a_i b_j - s t )- b_j (a_i s t - a_1 s  t)=a_1 b_j s t - s^2 t^2=s t (a_1 b_j - s t). \nonumber 
\end{eqnarray}
Hence, $u_{i,j,i}$ reduces to $0$. On the other hand, if $i \neq k$, then it follows that $\initial_{\prec}(a_i b_j - s t)=a_i b_j$ and $\initial_{\prec}(a_k s t - a_1 s  t)=a_k s t$ are relatively prime. Hence, for $i \neq k$, $u_{i,j,k}$ reduces to $0$ with respect to $a_i b_j - s t$, $a_k s t - a_1 s  t$.\\

Next, about the computational result of $u_{i,j,r}$, if $r=j>1$, then we have
\begin{eqnarray}
u_{i,j,k}=s t (a_i b_j - s t )- a_i (b_j s t - b_1 s  t)=a_i b_1 s t - s^2 t^2=s t (a_i b_1 - s t). \nonumber 
\end{eqnarray}
Hence, $u_{i,j,j}$ reduces to $0$. On the other hand, if $j \neq r$, it follows that $\initial_{\prec}(a_i b_j - s t)=a_i b_j$ and $\initial_{\prec}(b_r s t - b_1 s  t)=b_r s t$ are relatively prime. Hence, for $j \neq r$, $u_{i,j,r}$ reduces to $0$ with respect to $a_i b_j - s t$, $b_r s t - b_1 s  t$.\\

From the discussion of computational result of $u_{i,j,k}$ and $u_{i,j,r}$, we showed that $S(u,v)$ reduces to $0$.\\

Therefore, We showed that the set $G_{n,m} \cup A_{n} \cup B_{m}$
is a Gr\"{o}bner basis of $I_{L_2(n,m)}$ with repsect to $\prec$.

\subsubsection{Conclusion}

The set $G_{n,m} \cup A_{n} \cup B_{m}$ is a Gr\"{o}bner basis of $I_{L_2(n,m)}$ with repsect to $\prec$. Thus, we have
\begin{eqnarray}
\initial_{\prec}(I_{L_2(n,m)})=(\{ a_i b_j \mid 1 \leq i \leq n , 1 \leq j \leq m \} \cup \{ a_i s t \mid 1 \leq i \leq n \} \cup \{ b_i st \mid 1 \leq i \leq m \}). \nonumber
\end{eqnarray}
Hence, $\initial_{<}(I_{L_2(n,m)})$ is squarefree with respect to $\prec$. Therefore, $I_{L_2(n,m)}$ is radical.

\subsection{The proof of Theorem \ref{theo2}}

Let denote the following ideals:
\begin{eqnarray}
E_{n,m,r}&=&
(a_1 - a_n , \cdots , a_1 - a_2 , a_1 - b_m, \cdots , a_1 - b_1 , a_1 - c_r , \cdots , a_1-c_1 , s t - a^2_1), \nonumber \\
X_{n,m}&=&(s,a_1,\cdots,a_n,b_1,\cdots,b_m) \cap (a_1,\cdots,a_m,b_1,\cdots,b_m,t), \nonumber \\
Y_{m,r}&=&(s,b_1,\cdots,b_m,c_1,\cdots,c_r) \cap (b_1,\cdots,b_m,c_1,\cdots,c_r,t), \nonumber \\
Z_{n,r}&=&(s,a_1,\cdots,a_n,c_1,\cdots,c_r) \cap (a_1,\cdots,a_n,c_1,\cdots,c_r,t). \nonumber
\end{eqnarray}

The outline of proof is to show that all primary ideals appearing in  the primary decomposition of $I_{L_3(n,m,r)}$ is prime ideal.\\

First, by using Risa/Asir \cite{5} , we have
\begin{eqnarray}\label{equation1}
I_{L_3(n,1,1)}&=&E_{n,1} \cap X_{n,1} \cap Y_{1,1} \cap Z_{n,1} \quad \text{for $n=2,3,\cdots,10$}, \\
\label{equation2}
I_{L_3(n,1,1)}&=&E_{n,m} \cap X_{n,m} \cap Y_{m,1} \cap Z_{n,1} \quad \text{for $2 \leq n \leq 4$, $2 \leq m \leq 4$}, \\
\label{equation3}
I_{L_3(n,m,r)}&=&E_{n,m,r} \cap X_{n,m} \cap Y_{m,r} \cap Z_{n,r} \quad \text{for $2 \leq n , m , r \leq 3$}.
\end{eqnarray}
We comment a little here. The above results were obtained by doing something like the following computation of the case $(n,m,r)=(3,1,1)$:
\begin{verbatim}
[1] load("primdec")$
[2] primadec([a_1 * b_1 - s * t , a_1 * c_1 - s * t ,
a_2 * b_1 - s * t , a_2 * c_1 - s * t ,
a_3 * b_1 - s * t , a_3 * c_1 - s * t , b_1 * c_1 - s * t],[t,c_1,b_1,a_3,a_2,a_1,s]);
[3] [[[a_1-a_2,a_1-a_3,a_1-b_1,a_1-c_1,t*s-a_1^2],[a_1-a_2,a_1-a_3,a_1-b_1,a_1-c_1,t*s-a_1^2]],
[[a_1,a_2,a_3,b_1,t],[a_1,a_2,a_3,b_1,t]],[[s,a_1,a_2,a_3,b_1],[s,a_1,a_2,a_3,b_1]],
[[a_1,a_2,a_3,c_1,t],[a_1,a_2,a_3,c_1,t]],[[s,a_1,a_2,a_3,c_1],[s,a_1,a_2,a_3,c_1]],
[[b_1,c_1,t],[b_1,c_1,t]],[[s,b_1,c_1],[s,b_1,c_1]]]
\end{verbatim}\vspace{0.1in}

By $(\ref{equation1})$, $(\ref{equation2})$, $(\ref{equation3})$, we have
\begin{eqnarray}
\sqrt{I_{L_3(n,1,1)}}&=&\sqrt{E_{n,1,1}} \cap \sqrt{X_{n,1} \cap Y_{1,1} \cap Z_{n,1}} \quad \text{for $n=2,3,\cdots,10$}, \nonumber \\
\label{equation22}
\sqrt{I_{L_3(n,1,1)}}&=&\sqrt{E_{n,m,1}} \cap \sqrt{X_{n,m} \cap Y_{m,1} \cap Z_{n,1}} \quad \text{for $2 \leq n \leq 4$, $2 \leq m \leq 4$}, \nonumber \\
\label{equation33}
\sqrt{I_{L_3(n,m,r)}}&=&\sqrt{E_{n,m,r}} \cap \sqrt{X_{n,m} \cap Y_{m,r} \cap Z_{n,r}} \quad \text{for $2 \leq n , m , r \leq 3$}. \nonumber
\end{eqnarray}

Then, we have
\begin{eqnarray}\label{equation11}
\sqrt{I_{L_3(n,1,1)}}&=&\sqrt{E_{n,1,1}} \cap X_{n,1} \cap Y_{1,1} \cap Z_{n,1} \quad \text{for $n=2,3,\cdots,10$}, \\
\label{equation22}
\sqrt{I_{L_3(n,1,1)}}&=&\sqrt{E_{n,m,1}} \cap X_{n,m} \cap Y_{m,1} \cap Z_{n,1} \quad \text{for $2 \leq n \leq 4,2 \leq m \leq 4$}, \\
\label{equation33}
\sqrt{I_{L_3(n,m,r)}}&=&\sqrt{E_{n,m,r}} \cap X_{n,m} \cap Y_{m,r} \cap Z_{n,r} \quad \text{for $2 \leq n , m , r \leq 3$}.
\end{eqnarray}
In fact, It is clear from the following lemma.

\begin{lem}\label{prime ideal}\it
Let $\{ i_1,\cdots,i_s \}$ be a subset of $L$, where $i_1<i_2<\cdots<i_s$. Let $I$ be the ideal $(i_1,\cdots,i_s)$. Then, $I$ is prime ideal.
\end{lem}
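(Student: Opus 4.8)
The plan is to reduce the statement to the elementary fact that an ideal generated by a subset of the indeterminates of a polynomial ring over a field is prime; I expect the chain hypothesis $i_1 < i_2 < \cdots < i_s$ to play no role in the argument beyond fixing a labelling of the elements, and the real content is just an identification of a quotient ring.

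Recall that $K[L]$ is the polynomial ring whose indeterminates are precisely the elements of $L$, so $I=(i_1,\ldots,i_s)$ is generated by $s$ of these indeterminates. First I would write $L \setminus \{i_1,\ldots,i_s\}=\{j_1,\ldots,j_t\}$ for the complementary set of variables and exhibit the quotient explicitly: the $K$-algebra homomorphism $\varphi \colon K[L] \to K[j_1,\ldots,j_t]$ that sends each $i_\ell$ to $0$ and fixes every $j_u$ is surjective, and I claim its kernel is exactly $I$. The inclusion $I \subseteq \Ker \varphi$ is immediate. For the reverse inclusion, I would expand any $f \in \Ker \varphi$ in the monomial basis of $K[L]$ and split its support into monomials divisible by some $i_\ell$ and those involving only the $j_u$; applying $\varphi$ annihilates the first group and leaves the second unchanged, so $f \in \Ker \varphi$ forces the second group to vanish, whence $f \in I$. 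This yields
\[
K[L]/I \cong K[j_1,\ldots,j_t].
\]

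Since $K$ is a field and a polynomial ring over a field in any number of variables is an integral domain, the right-hand side is an integral domain; equivalently, $I$ is prime, which is the assertion. The hard part, such as it is, will be only the verification that $\Ker \varphi$ equals $I$ and is not strictly larger, and this is routine once the monomial basis is invoked. I would also emphasise that neither the lattice structure nor the ordering of the $i_\ell$ is used: the conclusion holds verbatim for the ideal generated by any subset of the variables, which is exactly what is required to certify the primality of the factors $X_{n,m}$, $Y_{m,r}$, and $Z_{n,r}$ appearing in the primary decompositions $(\ref{equation1})$, $(\ref{equation2})$, $(\ref{equation3})$ above.
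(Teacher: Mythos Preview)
Your argument is correct. Both you and the paper rely on the same elementary fact---that modding out a polynomial ring by a subset of its variables leaves a polynomial ring in the remaining variables, hence an integral domain---but you package it differently. The paper checks primality directly: it assumes $ab\in I$ with $a,b\notin I$ and claims that then $a,b$ lie in the subring $K[L\setminus\{i_1,\ldots,i_s\}]$, yielding a contradiction. As literally written that step is loose (an element not in $I$ can still involve the variables $i_\ell$; one must first reduce modulo $I$), whereas your quotient-ring formulation via the evaluation map $\varphi$ makes exactly this reduction explicit and avoids the gap. Your observation that the chain condition $i_1<\cdots<i_s$ is irrelevant is also correct and matches how the lemma is actually applied to the ideals $X_{n,m}$, $Y_{m,r}$, $Z_{n,r}$.
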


\begin{proof}Let $a$ and $b$ be the elemetns of $K[L]$ such that $ab$ belongs to $I$. Suppose neither a nor b belongs to $I$. Then, $a$ and $b$ belong to the polynomial ring over $K$ whose variables are the elements of $L \setminus \{ i_1 , \cdots , i_s \}$. Therefore, since $a$ and $b$ do not contain the variables $i_1, i_2, \cdots, i_s$, it contradicts that $ab$ belongs to $I$. Hence, $a$ is in $I$ or $b$ is in $I$. Therefore, $I$ is prime ideal.
\end{proof}

Now, by using Risa/Asir \cite{5} , we computed the prime decomposition of $\sqrt{E_{n,m,r}}$ appearing in the right-hand side of $(\ref{equation11})$ , $(\ref{equation22})$ and $(\ref{equation33})$. It is as follows:
\begin{eqnarray}\nonumber
\sqrt{E_{n,m,r}}=
\begin{cases}
E_{n,1,1}, &  n=2,3,\cdots,10, \\
E_{n,m,1}, & 2 \leq n \leq 4,2 \leq m \leq 4, \\
E_{n,m,r}, & 2 \leq n,m,r \leq 3.
\end{cases}
\end{eqnarray}

Hence, it follows from $(\ref{equation11})$ , $(\ref{equation22})$ and $(\ref{equation33})$ that we have
\begin{eqnarray}
\sqrt{I_{L_3(n,1,1)}}&=&E_{n,1,1} \cap X_{n,1} \cap Y_{1,1} \cap Z_{n,1} \quad \text{for $n=2,3,\cdots,10$}, \nonumber \\
\sqrt{I_{L_3(n,1,1)}}&=&E_{n,m,1} \cap X_{n,m} \cap Y_{m,1} \cap Z_{n,1} \quad \text{for $2 \leq n \leq 4,2 \leq m \leq 4$}, \nonumber \\
\sqrt{I_{L_3(n,m,r)}}&=&E_{n,m,r} \cap X_{n,m} \cap Y_{m,r} \cap Z_{n,r} \quad \text{for $2 \leq n , m , r \leq 3$}. \nonumber
\end{eqnarray}
Therefore, we proved Theorem $\ref{theo2}$.

\section{Crystal conjecture}

In this section, we introduce the conjectures that occur naturally by Theorem \ref{theo1} and $\ref{theo2}$. It is as follows:

\begin{conj}\label{Crystal conjecture}[Crystal conjecture]\it
The join-meet ideal $I_{L_k(n_1,n_2,\cdots,n_k)}$ is radical.
\end{conj}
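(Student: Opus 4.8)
The plan is to extend the Gr\"obner-basis strategy behind Theorem \ref{theo1} from two chains to arbitrarily many, treating the only genuinely new regime $k\ge 3$ (the lattice $L_1(n_1)$ is a chain, hence distributive, and $k=2$ is exactly Theorem \ref{theo1}). First I would record the generators: two elements in different chains meet at $s$ and join at $t$, while comparable elements give $f_{a,b}=0$, so
\[
I_{L_k(n_1,\cdots,n_k)}=(\{\, a_{i,p}a_{j,q}-st \mid i<j,\ 1\le p\le n_i,\ 1\le q\le n_j \,\}).
\]
I would keep the inverse lexicographic order induced by $s\prec a_{1,1}\prec\cdots\prec a_{k,n_k}\prec t$ as in $(\ref{prime order})$, so that $\initial_{\prec}(a_{i,p}a_{j,q}-st)=a_{i,p}a_{j,q}$, exactly as in the proof of Theorem \ref{theo1}.

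The decisive new feature, invisible when $k=2$, is that all $st$-multiples of chain elements collapse to a single one. I would isolate this as a lemma: for $k\ge 3$,
\[
st\,a_{i,p}\equiv st\,a_{1,1} \pmod{I_{L_k(n_1,\cdots,n_k)}} \quad\text{for every }(i,p).
\]
The argument uses a third chain. From $a_{i,p}a_{j,1}-st$ and $a_{i,1}a_{j,1}-st$ one gets $a_{j,1}(a_{i,p}-a_{i,1})\in I$; multiplying by $a_{l,1}$ with $l\neq i,j$ --- available precisely because $k\ge 3$ --- and using $a_{j,1}a_{l,1}\equiv st$ yields $st\,(a_{i,p}-a_{i,1})\in I$, so within a chain every $st\,a_{i,p}$ is congruent to $st\,a_{i,1}$. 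A parallel three-chain computation, $st\,a_{i,1}=a_{i,1}a_{j,1}a_{l,1}=st\,a_{j,1}$, then identifies the bottoms of distinct chains. This should be contrasted with $k=2$, where $A_n$ and $B_m$ deliberately keep the two chains' $st$-multiples separate; the passage to $k\ge 3$ is what merges them.

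With the collapse in hand, the natural candidate Gr\"obner basis is
\[
\{\, a_{i,p}a_{j,q}-st \mid i<j \,\}\cup\{\, a_{i,p}st-a_{1,1}st \mid (i,p)\neq(1,1) \,\}.
\]
The product (coprimality) criterion disposes of every pair of quadratic generators whose leading monomials share no variable; when two generators share one variable, the $S$-polynomial equals $st\,(a_{i,p}-a_{j,q})$, and reducing each term by the linear part sends both to $a_{1,1}st$, hence to $0$. If this set were a Gr\"obner basis, its initial ideal would be $(\{\, a_{i,p}a_{j,q}\mid i<j \,\}\cup\{\, a_{i,p}st\mid (i,p)\neq(1,1) \,\})$, a squarefree monomial ideal, and radicality would follow exactly as in the conclusion of Theorem \ref{theo1}.

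The hard part, and the reason this is only a conjecture, is that the Buchberger check does \emph{not} close up squarefreely. The $S$-polynomial of $a_{1,1}a_{2,1}-st$ with the collapse relation $a_{2,1}st-a_{1,1}st$ is $a_{1,1}^{2}st-s^{2}t^{2}$, which lies in $I$ yet is already in normal form and nonzero; forcing it into the basis introduces a generator whose leading monomial ($a_{1,1}^{2}st$ or $s^{2}t^{2}$) is \emph{not} squarefree. Thus the squarefree-initial-ideal route that settles $k=2$ appears to break for $k\ge 3$, so I would instead aim to generalize the primary-decomposition argument of Theorem \ref{theo2}, proving that for all $k$ and all $n_1,\dots,n_k$ the ideal decomposes as the intersection of the monomial components (prime by Lemma \ref{prime ideal}) with a single ``diagonal'' component $\sqrt{E}$, and that this $\sqrt{E}$ is itself prime. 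Establishing the decomposition and the primeness of $\sqrt{E}$ uniformly --- rather than by the finite Risa/Asir verification, which only covers bounded parameters --- is the step I expect to resist a short proof.
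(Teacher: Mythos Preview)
This statement is labeled a conjecture in the paper and is not proved there; Section~3 merely records it, formulates the auxiliary Conjecture~\ref{k=3} about the existence of a squarefree initial ideal for $k\ge 3$, and reports that extensive Risa/Asir experiments failed to locate such a monomial order. There is therefore no proof in the paper to compare your attempt against.

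Your proposal is, correspondingly, not a proof but an honest analysis, and the analysis is essentially correct. The collapse relation $st\,a_{i,p}\equiv st\,a_{1,1}$ for $k\ge 3$ is valid and is precisely the phenomenon that separates $k\ge 3$ from $k=2$. Your concrete obstruction --- that Buchberger's algorithm, run on the natural generators together with the collapse relations under the order $(\ref{prime order})$, is forced to adjoin $a_{1,1}^{2}st-s^{2}t^{2}$, whose leading monomial is not squarefree --- is sharper than what the paper offers: the paper only says experimentally that no squarefree order was found, whereas you explain \emph{why} the obvious order cannot work. This does not by itself refute Conjecture~\ref{k=3} (some other order might still succeed), but it supplies a theoretical reason to share the paper's pessimism about that route. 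Your fallback plan --- generalize the primary-decomposition template of Theorem~\ref{theo2} to arbitrary $k,n_1,\dots,n_k$ and show the single non-monomial component $E$ is prime --- is exactly the method the paper uses for the small cases it verifies, and you correctly identify that carrying it out uniformly (rather than by finite computer check) is the genuine open problem. In short, you and the paper are aligned: neither proves the conjecture, but your write-up makes the obstruction to the squarefree-initial-ideal approach explicit rather than merely empirical.
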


We consider that Conjecture \ref{Crystal conjecture} is positive. The reason is as follows. By the proof of theorem \ref{theo1}, it was confirmed the existence of monomial order $\prec$ which is satisfying $\initial_{\prec}(L_2(n_1,n_2))=\sqrt{ \initial_{\prec}(L_2(n_1,n_2))}$. Furthermore, the method of constructing $\prec$ was simple. Hence, for $k=3$, we can conjecture that there may be such a monomial order. Therefore, the following conjecture naturally occurs:

\begin{conj}\label{k=3}\it
For  $k \geq 3$ and $(n_1,\cdots,n_k) \neq (1,\cdots,1)$, it exists a monomial order $\prec'$ such that $I_{L_k(n_1,\cdots,n_k)}$ is squarefree with respect to $\prec'$.
\end{conj}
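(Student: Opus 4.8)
The plan is to imitate the proof of Theorem \ref{theo1} as closely as possible: fix a reverse lexicographic order $\prec'$ extending the total order $s \prec' a_{1,1} \prec' \cdots \prec' a_{k,n_k} \prec' t$, exhibit an explicit finite set that one hopes is a Gr\"obner basis of $I_{L_k(n_1,\dots,n_k)}$, verify Buchberger's criterion, and read off $\initial_{\prec'}(I_{L_k(n_1,\dots,n_k)})$. Since the only nontrivial generators of $I_{L_k}$ are $f_{a_{i,p},a_{j,q}}=a_{i,p}a_{j,q}-st$ with $i\neq j$ (within one chain $f=0$, and $f_{s,\ast}=f_{t,\ast}=f_{s,t}=0$), and $\initial_{\prec'}(a_{i,p}a_{j,q}-st)=a_{i,p}a_{j,q}$, the natural candidate basis is $G\cup D$, where $G$ is the set of these generators and $D$ is a set of difference binomials $a_{i,p}\,st-(\text{monomial})$ designed so that every $a_{i,p}st$ reduces to one fixed monomial. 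If this succeeds, $\initial_{\prec'}(I_{L_k})$ is generated by the squarefree monomials $a_{i,p}a_{j,q}$ $(i\neq j)$ together with the $a_{i,p}st$, whence $I_{L_k}$ is radical; so it suffices to settle the Gr\"obner-basis form of Conjecture \ref{k=3}.

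First I would run Buchberger's criterion on $G\cup D$. For $k=2$ the chain-local differences $a_{i,p}st-a_{i,1}st$ already suffice, which is exactly Theorem \ref{theo1}. The genuinely new feature for $k\ge 3$ is the appearance of \emph{three-chain collisions}: for three pairwise distinct chains the generators $a_{i,p}a_{j,q}-st$ and $a_{i,p}a_{l,r}-st$ share the single variable $a_{i,p}$, and
\[
S(a_{i,p}a_{j,q}-st,\;a_{i,p}a_{l,r}-st)=a_{j,q}st-a_{l,r}st .
\]
Such an $S$-polynomial cannot arise when $k=2$. With only chain-local differences it reduces to $a_{j,1}st-a_{l,1}st\neq 0$, so one is forced to enlarge $D$ by the base-linking binomials $a_{i,1}st-a_{1,1}st$ $(i>1)$, after which every $a_{i,p}st$ reduces to $a_{1,1}st$ and the collisions close.

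The main obstacle is that the base-linking binomials themselves create fresh overlaps with $G$ that do \emph{not} close. Taking the generator $a_{i,1}a_{1,q}-st$ (a chain-$i$ base, $i>1$, paired with a chain-$1$ element) and the base-linking binomial $a_{i,1}st-a_{1,1}st$, one computes
\[
S(a_{i,1}a_{1,q}-st,\;a_{i,1}st-a_{1,1}st)=a_{1,1}a_{1,q}\,st-s^2t^2,
\]
which reduces, using the chain-$1$ differences, to $a_{1,1}^2\,st-s^2t^2$. Here is the crux: both monomials $a_{1,1}^2 st$ and $s^2t^2$ are non-squarefree, and $a_{1,1}st\notin\initial_{\prec'}(I_{L_k})$ (it is the global reference and is never a leading term), so $a_{1,1}^2 st$ becomes a new \emph{minimal} generator of $\initial_{\prec'}(I_{L_k})$ divisible by no squarefree generator. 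Consequently this natural order provably fails to produce a squarefree initial ideal once $k\ge 3$, which is exactly why the statement is only conjectural. To make progress one must abandon the straightforward reverse lexicographic order and search for a genuinely different $\prec'$ — for instance a weight order under which some generators have $st$, rather than $a_{i,p}a_{j,q}$, as leading term, so that the binomial $a_{1,1}^2st-s^2t^2$ never enters a reduced Gr\"obner basis — and then verify Buchberger's criterion uniformly in $k$ and in $n_1,\dots,n_k$. Controlling this infinite family of $S$-polynomials simultaneously is the hard part.

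A complementary line, closer in spirit to Theorem \ref{theo2}, would bypass monomial orders altogether: conjecture a primary decomposition generalizing $(\ref{equation3})$, say $I_{L_k}=E\cap\bigcap_{i<j}(P_{ij}\cap Q_{ij})$, where each $P_{ij},Q_{ij}$ is a monomial prime of the shape treated in Lemma \ref{prime ideal} and $\sqrt{E}=E$, and then deduce radicality directly from $\sqrt{I_{L_k}}=\sqrt{E}\cap\bigcap_{i<j}(P_{ij}\cap Q_{ij})=I_{L_k}$. The difficulty there is to prove that the proposed intersection equals $I_{L_k}$ itself, not merely $\sqrt{I_{L_k}}$, for all $k$ and all $n_i$; at present this is confirmed only by computer for the finitely many cases recorded in Theorem \ref{theo2}.
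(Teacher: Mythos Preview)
The statement you are attempting is Conjecture~\ref{k=3}, and the paper offers no proof of it. On the contrary, the paper records that extensive computer search with Risa/Asir failed to locate any monomial order $\prec'$ with $\initial_{\prec'}(I_{L_3(n_1,n_2,n_3)})$ squarefree for $(n_1,n_2,n_3)\neq(1,1,1)$, and the author explicitly writes that ``unfortunately, Conjecture~\ref{k=3} may be negative.'' So there is nothing in the paper to compare your proposal against.

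Your write-up is not a proof either, and you clearly know this: it is a careful diagnosis of \emph{why} the obvious extension of the $k=2$ argument breaks, followed by two speculative routes forward. The diagnosis is correct and sharper than anything the paper states. The three-chain collision $S(a_{i,p}a_{j,q}-st,\;a_{i,p}a_{l,r}-st)=a_{j,q}st-a_{l,r}st$ with $i,j,l$ pairwise distinct does force a cross-chain linking element into the candidate basis, and the further overlap you compute,
\[
S(a_{i,1}a_{1,q}-st,\;a_{i,1}st-a_{1,1}st)=a_{1,1}a_{1,q}\,st-s^{2}t^{2},
\]
indeed reduces only to $a_{1,1}^{2}st-s^{2}t^{2}$, whose leading monomial is not squarefree and is divisible by no other leading term in the basis. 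The contrast with $k=2$ is exactly as you say: there the analogous remainder $a_{1}b_{j}st-s^{2}t^{2}$ factors as $st(a_{1}b_{j}-st)$ because $a_{1}$ and $b_{j}$ lie in different chains, whereas here $a_{1,1}$ and $a_{1,q}$ lie in the same chain, so $a_{1,1}a_{1,q}-st\notin I_{L_k}$.

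Your two suggested workarounds---a weight order flipping some leading terms to $st$, or a direct primary decomposition generalizing $(\ref{equation3})$---are reasonable research directions, but neither is executed, and the paper's experiments suggest the first may simply be impossible. In short: there is no error in your reasoning, but there is also no proof, which matches the paper, where the statement remains open and possibly false.
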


\begin{remark}[Reason for imposing $(n_1,\cdots,n_k) \neq (1,1,1)$] By [1, Theorem 6.10 (Dedekind)], since we have $L_3(1,1,1)=M_5$, a finite lattice $L_k(1,1,1,\cdots,1)$ is non-distributive modular lattice. Hence, by [2, Theorem 1.3], it do not exist a monomial order such that $I_{L_k(1,\cdots,1)}$ is squarefree. From such a fact, it  imposes $(n_1,\cdots,n_k) \neq (1,1,1)$ in Conjecture \ref{k=3}.
\end{remark}

In Conjecture $\ref{k=3}$, if the method of constructing $\prec'$ can be formulated as an algorithm that does not depend on $k$, Conjecture \ref{Crystal conjecture} be resolved. Hence, it is worth working on Conjecture $\ref{k=3}$. However, in the case $k=3$, although we computed a lot with Risa/Asir \cite{5}, we are not yet checked  a monomial order $\prec'$ which is satisfying $\initial_{\prec'}(L_3(n_1,n_2,n_3))=\sqrt{ \initial_{\prec'}(L_3(n_1,n_2,n_3))}$ for $(n_1,n_2,n_3) \neq (1,1,1)$. From this calculation experiment, unfortunately, Conjecture $\ref{k=3}$ may be negative. On the other hand, we can consider positively that it is very important result in terms of squarefree of join-meet ideal.

\section{Invariance of radicality by adding new relationship}

In this section, at first, we introduce a new finite lattice $L_2(n_1,n_2)[k',i_1,i_2]$ and $O_{n_1}$ which is created by adding a new relationship to $L_2(n_1,n_2)$. Next, we prove Theorem \ref{theo4} and \ref{theo3}. This result is similar to [2, Theorem 3.3] in terms of the opposite approach. and means invariance of radicality by adding new relationship. Hereafter, we explain each $ L_2 (n_1, n_2) [k', i_1, i_2] $ and $ O_ {n_1} $ separately.

\subsection{A finite lattice $L_2(n_1,n_2)[k',i_1,i_2]$}

Let $n_1 \geq 5$ and $n_2 \geq 5$. Let $i_1,i_2$ be non-negative integer and let $i_1 >1$, $4 < i_2 < n_2$ and $i_2-i_1 \geq 2$. Let $k'$ be non-negative integer which satisfies $3 \leq k' \leq n_ 2 - 2$ and $k' \neq n_1,n_2$. We denote $L_2(n_1,n_2)[k',i_1,i_2]$ by $L_2(n_1,n_2)$ which satisfies 
\begin{eqnarray}
a_{i_1} < b_{k'}, \quad b_{k'} < a_{i_2}. \nonumber
\end{eqnarray}
By [1, Theorem 6.10 (Dedekind)], note that $L_2(n_1,n_2)[k',i_1,i_2]$ be a non-modular lattice. In fact, since $b_1 < b_2 < b_3 < \cdots < b_{k'}$ and since $a_1 , a_2 , \cdots , a_{i_1-1}$ are incomparable to $b_1, b_2 , b_3 , \cdots,b_{k'}$ respectively, it exists a sublattice $\{s, a_1, b_1, b_2, b_{k'} \}$ of $L_2(n_1,n_2)[k',i_1,i_2]$ is isomorphic to the pentagon lattice $N_5$.

Before introducing the lemma to prove Theorem $\ref{theo4}$, we need to introduce some notation. Let
\begin{eqnarray}
G_1(i_1,{k'})&=&\{ a_i b_j - s b_{k'} \mid 1 \leq i \leq i_1 , 1 \leq j \leq {k'}-1 \}, \nonumber \\
G_2(i_1,i_2)&=&\{ a_i b_{k'} - a_{i_1} a_{i_2} \mid i_1+1 \leq i \leq i_2-1 \}, \nonumber \\
G_3(i_2,{k'})&=&\{ a_i b_j - b_{k'} t \mid i_2 \leq i \leq n_1 , {k'}+1 \leq j \leq n_2 \}, \nonumber \\
A_1(i_1)&=&\{ a_i s b_{k'} -a_1 s b_{k'} \mid 2 \leq i \leq i_1 \}, \nonumber \\
A_2(i_1,i_2)&=&\{ a_i a_{i_1} a_{i_2} - a_{i_1+1} a_{i_1} a_{i_2} \mid i_1 +2 \leq i \leq i_2 -1 \}, \nonumber \\
A_3(i_2)&=&\{ a_i b_{k'} t -a_{i_2} b_{k'} t \mid i_2+1 \leq i \leq n_1 \}, \nonumber \\
B_1({k'})&=&\{ b_i s b_{k'} -b_1 s b_{k'} \mid 2 \leq i \leq {k'}-1 \}, \quad B_2({k'})=\{ b_i b_{k'} t - b_{k+1} b_{k'} t  \mid {k'}+2 \leq i \leq n_2 \} \nonumber
\end{eqnarray}
and
\begin{eqnarray}
L_{i_1,k'}&=&\{ s , a_1 , \cdots , a_{i_1} , b_1 , \cdots, b_{{k'}-1} , b_{k'} \}, \nonumber \\
L_{i_1,i_2,k'}&=&\{ a_{i_1} , a_{i_1+1} , \cdots , a_{i_2-1} , a_{i_2} , b_{k'} \}, \nonumber \\
L_{i_2,k'}&=&\{ a_{i_2} ,  \cdots , a_{n_1} , b_{k'} , b_{{k'}+1} , \cdots, b_{n_2} , t \}. \nonumber
\end{eqnarray}
Note that a system of generators of $I_{L_2(n_1,n_2)[{k'},i_1,i_2]}$ be $G_1(i_1,{k'}) \cup G_2(i_1,i_2) \cup G_3(i_2,{k'})$.

\begin{lem}\label{theo444}\it
For $i_2-i_1>2$, the set 
\begin{eqnarray}
G_1(i_1,{k'}) \cup G_2(i_1,i_2) \cup G_3(i_2,{k'}) \cup A_1(i_1) \cup A_2(i_1,i_2) \cup A_3(i_2) \cup B_1({k'}) \cup B_2({k'}) \nonumber
\end{eqnarray}
is a Gr\"{o}bner basis of $I_{L_2(n_1,n_2)[k',i_1,i_2]}$ with respect to the inverse lexicographic order induced by $(\ref{prime order})$.
\end{lem}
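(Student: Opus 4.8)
The plan is to verify Buchberger's criterion for the proposed generating set, exactly in the style of the proof of Theorem~\ref{theo1}: I would show that every $S$-polynomial arising from a pair of elements reduces to $0$ modulo the set. The key structural observation is that the lattice $L_2(n_1,n_2)[k',i_1,i_2]$ decomposes, via the newly imposed relations $a_{i_1} < b_{k'} < a_{i_2}$, into three overlapping intervals $L_{i_1,k'}$, $L_{i_1,i_2,k'}$ and $L_{i_2,k'}$. Each of the generator families $G_1,G_2,G_3$ together with its associated reduction family $A_1\cup B_1$, $A_2$, $A_3\cup B_2$ is localized on one of these three pieces, so the bulk of the $S$-polynomial computations mirrors the $m=1$ and $m>1$ cases already handled in the proof of Theorem~\ref{theo1}, only with $st$ replaced by the appropriate "floor--ceiling" monomial $sb_{k'}$, $a_{i_1}a_{i_2}$ or $b_{k'}t$ on each interval.

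\textbf{Key steps, in order.} First I would record the leading terms with respect to the inverse lexicographic order induced by $(\ref{prime order})$: for $G_1$ the leading term is $a_ib_j$, for $G_2$ it is $a_ib_{k'}$, for $G_3$ it is $a_ib_j$, and for the $A_\ell,B_\ell$ families the leading terms are $a_i s b_{k'}$, $a_i a_{i_1} a_{i_2}$, $a_i b_{k'} t$, $b_i s b_{k'}$, $b_i b_{k'} t$ respectively; this is where the hypotheses $i_1>1$, $4<i_2<n_2$, $i_2-i_1\ge 2$ and $3\le k'\le n_2-2$ get used, to guarantee the intervals are nondegenerate and the orderings of subscripts come out as claimed. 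Second, I would treat the pairs internal to each interval by direct analogy with Tables~\ref{table1} and~\ref{table2}: pairs within $G_1$ (and within $A_1$, within $B_1$, and across $G_1\times(A_1\cup B_1)$) reduce exactly as in the $m>1$ case with $st\rightsquigarrow sb_{k'}$; pairs within $G_3$ (with $A_3,B_2$) reduce with $st\rightsquigarrow b_{k'}t$; and pairs within $G_2$ (with $A_2$) reduce with $st\rightsquigarrow a_{i_1}a_{i_2}$. Third, I would dispose of the cross-interval pairs: for most such pairs the two leading monomials are relatively prime, so the $S$-polynomial reduces to $0$ automatically, and the remaining overlapping pairs (those sharing the glue variables $a_{i_1}$, $b_{k'}$, or $a_{i_2}$) must be checked by hand to produce a standard expression in terms of the listed generators.

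\textbf{Main obstacle.} The hard part will be the cross-interval $S$-polynomials that genuinely overlap at the shared variables $a_{i_1}$ and $a_{i_2}$ and $b_{k'}$ --- for instance pairs from $G_1\times G_2$ sharing $b_{k'}$, or $G_2\times G_3$ sharing $a_{i_2}$ --- since here the two pieces are glued along the new relations and one cannot simply invoke coprimality of leading terms nor a verbatim copy of the earlier tables. I expect the hypothesis $i_2-i_1>2$ (strictly, as opposed to $\ge 2$ in the definition) to be exactly what is needed to keep $A_2(i_1,i_2)$ nonempty and to make these overlap reductions close up, which is presumably why the lemma is stated only for $i_2-i_1>2$. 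Once each such $S$-polynomial is exhibited as an explicit $K[L]$-combination of the generating set with leading term strictly smaller, Buchberger's criterion yields that the set is a Gr\"{o}bner basis, completing the proof.
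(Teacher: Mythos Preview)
Your plan is essentially the paper's own argument: split $L_2(n_1,n_2)[k',i_1,i_2]$ into the three sublattices $L_{i_1,k'}$, $L_{i_1,i_2,k'}$, $L_{i_2,k'}$, use Theorem~\ref{theo1} on each block so that all \emph{internal} $S$-pairs reduce to $0$, and then clear the cross-block $S$-pairs by coprimality of leading terms plus a few explicit reductions. The paper streamlines your second step by citing Theorem~\ref{theo1} directly rather than redoing the tables, and then appeals to \cite[Lemma~1.27]{1} (the coprime--leading--term criterion) to cut the remaining work down to the four $S$-polynomials (\ref{nihon1})--(\ref{nihon5}), each of which lies in $\{A_1,B_1\}\times\{A_3,B_2\}$ and overlaps only in the factor $b_{k'}$; these four are dispatched by one-line standard expressions.

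One small correction to your ``main obstacle'': the specific examples you name are not obstacles at all. The leading terms in $G_1$ are $a_ib_j$ with $i\le i_1$, $j\le k'-1$, those in $G_2$ are $a_ib_{k'}$ with $i_1+1\le i\le i_2-1$, and those in $G_3$ are $a_ib_j$ with $i\ge i_2$, $j\ge k'+1$; the index ranges are pairwise disjoint, so $G_1\times G_2$ and $G_2\times G_3$ have coprime leading terms and fall to \cite[Lemma~1.27]{1} immediately. The non-coprime cross-block pairs all involve the degree-three families $A_\ell,B_\ell$, where the glue variable $b_{k'}$ (or $a_{i_1},a_{i_2}$) sits inside the leading monomial, and it is these---in particular the four pairs the paper lists---that require the explicit reductions.
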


\begin{proof}
By Theorem \ref{theo1}, $G_1(i_1,{k'}) \cup A_1(i_1) \cup B_1({k'})$ is a Gr\"{o}bner basis with respect to $\prec$ of $L_{i_1,k'}$  and $G_2(i_1,i_2) \cup A_1(i_1,i_2)$ is a Gr\"{o}bner basis with respect to $\prec$ of $L_{i_1,i_2,k'}$. Also, it follows from Theorem \ref{theo1} that $G_3(i_2,{k'}) \cup A_3(i_2) \cup B_2({k'})$ is a Gr\"{o}bner basis with respect to $\prec$ of $L_{i_2,k'}$. Hence, it follows from [1, Lemma 1.27] that the $S$-polynomials which we only have to check are
\begin{eqnarray}
\label{nihon1}
&&S(a_i s b_{k'} -a_1 s b_{k'} , a_j b_{k'} t -a_{i_2} b_{k'} t) \quad \text{for $2 \leq i \leq i_1,i_2+1 \leq j \leq n_1$}, \\
\label{nihon3}
&&S(a_i s b_{k'} -a_1 s b_{k'} , b_j b_k t - b_{{k'}+1} b_{k'} t) \quad \text{for $2 \leq i \leq i_1, {k'}+2 \leq j \leq n_2$}, \\
\label{nihon4}
&&S( a_i b_{k'} t -a_{i_2} b_{k'} t , b_j s b_{k'} -b_1 s b_{k'}) \quad \text{for $i_2+1 \leq i \leq n_1 , 2 \leq j \leq {k'}-1$}, \\
\label{nihon5}
&&S(b_i s b_{k'} -b_1 s b_{k'} , b_j b_{k'} t - b_{{k'}+1} b_{k'} t) \quad \text{for $2 \leq i \leq {k'}-1 , {k'}+2 \leq j \leq n_2$}.
\end{eqnarray}
Now, the result of computation of $(\ref{nihon1}), \cdots , (\ref{nihon5})$  is as follows:
\begin{eqnarray}
S(a_i s b_{k'} - a_1 s b_{k'} , a_j b_{k'} t -a_{i_2} b_{k'} t)
&=&-s a_1 a_j b_{k'} t +s a_i a_{i_2} b_{k'} t \nonumber \\
&=&a_{i_2} t ( a_i s b_{k'} - a_1 s b_{k'} ) - s a_1 (a_j b_{k'} t - a_{i_2} b_{k'} t) \quad \text{for $2 \leq i \leq i_1,i_2+1 \leq j \leq n_1$}, \nonumber \\
S(a_i s b_{k'} -a_1 s b_{k'} , b_j b_{k'} t - b_{{k'}+1} b_{k'} t)
&=&- s a_1 b_j b_{k'} t + s a_i b_{{k'}+1} b_{k'} t \nonumber \\
&=&b_{{k'}+1} t (a_i s b_{k'} - a_1 s b_{k'}) - s a_1 (b_j b_{k'} t - b_{{k'}+1} b_{k'} t) \quad \text{for $2 \leq i \leq i_1, {k'}+2 \leq j \leq n_2$}, \nonumber \\
S( a_i b_{k'} t - a_{i_2} b_{k'} t , b_j s b_{k'} - b_1 s b_{k'})
&=&- s a_{i_2} b_j b_{k'} t + s a_i b_1 b_{k'} t \nonumber \\
&=&s b_1 (a_i b_{k'} t - a_{i_2} b_{k'} t) - a_{i_2} t (b_j s b_{k'} - b_1 s b_{k'}) \quad \text{for $i_2+1 \leq i \leq n_1 , 2 \leq j \leq {k'}-1$}, \nonumber \\
S(b_i s b_{k'} -b_1 s b_{k'} , b_j b_{k'} t - b_{{k'}+1} b_{k'} t)
&=&- s b_1 b_j b_{k'} t + s b_i b_{k'} b_{{k'}+1} t \nonumber \\
&=&b_{k+1} t (b_i s b_{k'} - b_1 s b_{k'}) - s b_1 (b_j b_{k'} t - b_{{k'}+1} b_{k'} t) \quad \text{for $2 \leq i \leq {k'}-1 , {k'}+2 \leq j \leq n_2$}. \nonumber
\end{eqnarray}
Thus, $S$-polynomials $(\ref{nihon1}), \cdots , (\ref{nihon5})$ reduce to $0$. Hence, for $i_2-i_1>2$, the set $G_1(i_1,{k'}) \cup G_2(i_1,i_2) \cup G_3(i_2,{k'}) \cup A_1(i_1) \cup A_2(i_1,i_2) \cup A_3(i_2) \cup B_1({k'}) \cup B_2({k'})$ is a Gr\"{o}bner basis of $I_{L_2(n_1,n_2)[{k'},i_1,i_2]}$ with respect to $\prec'$.
\end{proof}

\begin{lem}\label{theo4441}\it
For $i_2-i_1=2$, the set 
\begin{eqnarray}
G_1(i_1,{k'}) \cup G_2(i_1,i_1+2) \cup G_3(i_1+2,{k'}) \cup A_1(i_1) \cup A_3(i_1+2) \cup B_1({k'}) \cup B_2({k'}) \nonumber
\end{eqnarray}
is a Gr\"{o}bner basis of $I_{L_2(n_1,n_2)[{k'},i_1,i_1+2]}$ with respect to the inverse lexicographic order induced by $(\ref{prime order})$.
\end{lem}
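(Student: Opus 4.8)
The plan is to mirror the argument of Lemma~\ref{theo444}, the only structural change being that the middle block of the lattice collapses. When $i_2-i_1=2$ the sublattice $L_{i_1,i_1+2,k'}=\{a_{i_1},a_{i_1+1},a_{i_2},b_{k'}\}$ has bottom $a_{i_1}$, top $a_{i_2}$, and the two incomparable elements $a_{i_1+1}$ and $b_{k'}$ between them, so it is isomorphic to $L_2(1,1)$. By Theorem~\ref{theo1}, in the trivial case $n=m=1$ where the families $A$ and $B$ are empty, its join-meet ideal has the single binomial $a_{i_1+1}b_{k'}-a_{i_1}a_{i_2}$ as its Gr\"{o}bner basis with respect to $\prec$; this is exactly why $A_2(i_1,i_1+2)=\emptyset$ and the family $A_2$ disappears from the statement. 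The two outer blocks are untouched, so by Theorem~\ref{theo1} the sets $G_1(i_1,k')\cup A_1(i_1)\cup B_1(k')$ and $G_3(i_1+2,k')\cup A_3(i_1+2)\cup B_2(k')$ are still Gr\"{o}bner bases of $I_{L_{i_1,k'}}$ and $I_{L_{i_2,k'}}$.

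First I would apply [1, Lemma~1.27] to reduce the verification to the $S$-polynomials coming from pairs of generators lying in different blocks whose initial monomials are not coprime. The two outer blocks meet only in the variable $b_{k'}$, so the cross $S$-polynomials between $L_{i_1,k'}$ and $L_{i_2,k'}$ are precisely the families $(\ref{nihon1})$--$(\ref{nihon5})$. Since $A_1(i_1)$, $B_1(k')$, $A_3(i_1+2)$ and $B_2(k')$ are literally the same polynomials that occur in Lemma~\ref{theo444}, the standard representations displayed there apply verbatim and show that $(\ref{nihon1})$--$(\ref{nihon5})$ reduce to $0$; no new computation is needed for this part.

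It then remains to examine the $S$-polynomials pairing the lone middle binomial $g=a_{i_1+1}b_{k'}-a_{i_1}a_{i_2}$, whose initial monomial is $a_{i_1+1}b_{k'}$, with the generators of the outer blocks. The pairs with $G_1(i_1,k')$ and $G_3(i_1+2,k')$ have coprime initial monomials and are dismissed by [1, Lemma~1.27], so the substantive cases are the four families sharing the variable $b_{k'}$, namely $S\big(g,\,a_p s b_{k'}-a_1 s b_{k'}\big)$ and $S\big(g,\,b_p s b_{k'}-b_1 s b_{k'}\big)$ on the side of the lower block and $S\big(g,\,a_q b_{k'}t-a_{i_2}b_{k'}t\big)$ and $S\big(g,\,b_q b_{k'}t-b_{k'+1}b_{k'}t\big)$ on the side of the upper block. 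For each I would cancel the $b_{k'}$-overlap, rewrite the one surviving monomial divisible by $a_{i_1+1}b_{k'}$ through $g$, and reduce the remainder against the remaining generators.

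This last family is the step I expect to be the genuine obstacle. The identical pairs (with $G_2$ in place of the single $g$) already occur in Lemma~\ref{theo444}, where they are covered by the appeal to [1, Lemma~1.27], so the cleanest route is to invoke that lemma in exactly the same way. If instead one checks them by hand, the delicate point is that, after cancelling $b_{k'}$ and rewriting through $g$, the remainder of, say, $S\big(g,\,a_p s b_{k'}-a_1 s b_{k'}\big)$ is a multiple of $s\,a_{i_1}a_{i_2}(a_1-a_p)$, a monomial that contains no $b$-variable and hence is divisible by none of the listed initial monomials; one must confirm that this remainder already lies in the ideal generated by the stated set, equivalently that $\initial_{\prec}(I_{L_2(n_1,n_2)[k',i_1,i_1+2]})$ remains squarefree. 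Settling this is the real content of the lemma, since here the family $A_2$ that might otherwise have absorbed such remainders is absent.
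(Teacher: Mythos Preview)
Your outline matches the paper's two-line argument exactly: observe that $G_2(i_1,i_1+2)$ collapses to the single binomial $g=a_{i_1+1}b_{k'}-a_{i_1}a_{i_1+2}$ (whence $A_2=\varnothing$), then invoke [1, Lemma~1.27] together with the reductions (\ref{nihon1})--(\ref{nihon5}) already carried out in Lemma~\ref{theo444}. That is literally all the paper's proof says, so your ``cleanest route'' reproduces it verbatim.

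The genuine gap is that your proposal does not actually close. In the last paragraph you correctly isolate the $S$-pairs between $g$ and the families $A_1,B_1,A_3,B_2$---whose initial monomials all share $b_{k'}$ with $\initial_\prec(g)=a_{i_1+1}b_{k'}$ and hence are \emph{not} coprime---but you end with ``one must confirm'' and ``settling this is the real content'' without doing either. Your own hand computation makes the difficulty concrete: the remainder $s\,a_{i_1}a_{i_2}(a_1-a_p)$ of $S\big(g,\,a_psb_{k'}-a_1sb_{k'}\big)$ has leading monomial $s\,a_pa_{i_1}a_{i_2}$, and every initial monomial in the displayed set contains a $b$-variable, so nothing divides it and that $S$-polynomial does not reduce to~$0$. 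If this reduction is correct the displayed set is not a Gr\"obner basis at all. The paper never confronts these middle--outer $S$-pairs either (in Lemma~\ref{theo444} they are silently absorbed into the same citation of [1, Lemma~1.27]), so either that lemma asserts something stronger than the coprime criterion you are tacitly assuming, or you have located a defect common to both lemmas. In either case, a proposal that stops at naming the obstacle is not a proof; you must either verify that [1, Lemma~1.27] genuinely disposes of these pairs or exhibit the missing reductions explicitly.
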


\begin{proof}Since $i_2=i_1+2$, we have $G_2(i_1,i_1+2)=\{ a_{i_1+1} b_{k'} - a_{i_1} a_{i_1+2} \}$. Hence, we have Theorem \ref{theo4441} by  [1, Lemma 1.27] and  the computational result of $(\ref{nihon1}), \cdots , (\ref{nihon5})$.
\end{proof}

Now, we prove Theorem \ref{theo4}.

\begin{proof}[\textup{The proof of Theorem \ref{theo4}}]By Theorem \ref{theo444} and \ref{theo4441}, the join-meet ideal $I_{L_2(n_1,n_2)[{k'},i_1,i_2]}$ is squarefree with respect to the inverse lexicographic order induced by $(\ref{prime order})$. Hence, it is radical.
\end{proof}

\subsection{A finite lattice $O_{n_1}$}

Let $n=n_1$. Let $O_{n}$ be $L_2(n,n)$ which satisfies 
\begin{eqnarray}
a_{i} < b_{i+1} < a_{i+2}, \nonumber
\end{eqnarray}
where $i$ is odd number.

Let prove Therorem \ref{theo3}. The outline of proof of it is to show that a system of generators of the join-meet ideal $I_{O_n}$ is a Gr\"{o}bner basis of $I_{O_n}$ with respect to the inverse lexicographic order induced by $(\ref{prime order})$. In short, we show that $O_{n}$ is a distributive lattice. Note that $\prec$ is a rank reverse lexicographic order on $K[O_n]$; see [1, Example 6.16] for definitions.

\begin{proof}[\textup{The proof of Therorem $\ref{theo3}$}]
At first, we clarify a system of generators of the join-meet ideal $I_{O_n}$. Let $R$ be a system of generators of $I_{O_n}$. Since $a_1,\cdots,a_n$ are incomparable with $b_1,\cdots,b_n$ respectively, then we have
\begin{eqnarray}
R&=&\{ f_{a,b} \mid \text{$a,b \in L$ such that $a$ and $b$ are incomparable} \} \nonumber \\
&=&\{ f_{a_1,b_j} \mid 1 \leq j \leq n \} \cup \cdots \cup \{ f_{a_n,b_j} \mid 1 \leq j \leq n \} \nonumber \\
&=&\bigcup_{i=1}^{n} \{ f_{a_i,b_j} \mid 1 \leq j \leq n \}. \nonumber
\end{eqnarray}
Hence, we must consider the following cases:
\begin{eqnarray}
\text{Case $1$}&:&\text{The calculation of $f_{a_i,b_j}$ for  $i \equiv 0 (\bmod 2)$}, \nonumber \\
\text{Case $2$}&:&\text{The calculation of $f_{a_i,b_j}$ for $i \equiv 1 (\bmod 2)$}. \nonumber
\end{eqnarray}

(\text{Case $1$}) Let $i$ be even nummber. Now, a finite lattice $O_n$ satisfies the following inequality:
\begin{eqnarray}\label{bunpaihannteisnohanasi1}
s \leq b_{1} \leq b_2 \leq \cdots \leq b_{i-2} \leq  a_{i-1} \leq a_i \leq a_{i+1} \leq b_{i+2} \leq \cdots \leq b_{n} \leq t.
\end{eqnarray}
By $(\ref{bunpaihannteisnohanasi1})$, we have
\begin{eqnarray}\label{bunpaihannteisnohanasi2}
b_{1} \leq \cdots \leq b_{i-2} \leq a_i \leq b_{i+2} \leq \cdots \leq b_{n}.
\end{eqnarray}
Hence, it follows from $(\ref{bunpaihannteisnohanasi2})$ that we have $f_{a_i,b_j}=0$ for $j \leq i-2$, $i+2 \leq j$. On the other hand, since $a_i$ is incomparable to $b_{i-1}$, $b_i$, $b_{i+1}$ respectively, we must consider the calculation of $a_i \join b_{\ell}$, $a_i \meet b_{\ell}$, where $\ell=i-1,i,i+1$.\\

First, in the case $j=i-1$, since
\begin{eqnarray}
b_{i-2} \leq a_{i-1} \leq a_i \leq a_{i+1}, \quad b_{i-2} \leq b_{i-1} \leq b_{i} \leq a_{i+1}, \nonumber
\end{eqnarray}
we have $a_i \join b_{i-1} = a_{i+1}$, $a_i \meet b_{i-1} = b_{i-2}$. Hence, for $j=i-1$, we have
\begin{eqnarray}
f_{a_i,b_j}=f_{a_i,b_{i-1}}=a_i b_{i-1} - a_{i+1} b_{i-2}.
\nonumber
\end{eqnarray}

Second, in the case $j=i$, since 
\begin{eqnarray}
a_{i-1} \leq a_i \leq a_{i+1}, \quad a_{i-1} \leq b_{i} \leq a_{i+1}, \nonumber
\end{eqnarray}
we have $a_i \join b_i = a_{i+1}$, $a_i \meet b_i = a_{i-1}$. Hence, for $j=i$, we have
\begin{eqnarray}
f_{a_i,b_j}=f_{a_i,b_i}=a_i b_i - a_{i-1} a_{i+1}.
\nonumber
\end{eqnarray}

Finally, in the case $j=i+1$, since
\begin{eqnarray}
a_{i-1} \leq a_i \leq a_{i+1} \leq b_{i+2}, \quad a_{i-1} \leq b_i \leq b_{i+1} \leq b_{i+2},  \nonumber
\end{eqnarray}
we have $a_i \join b_{i+1} = b_{i+2}$, $a_i \meet b_{i+1} = a_{i-1}$. Hence, for $j=i+1$, we have
\begin{eqnarray}
f_{a_i,b_j}=f_{a_i,b_{i+1}}=a_i b_{i+1} - a_{i-1} b_{i+2}. \nonumber
\end{eqnarray}

Therefore, the polynomial $f_{a_i,b_j}$ is as follows:
\begin{eqnarray}\label{even poly}
f_{a_i,b_j}=
\begin{cases}
0, & j \leq i-2, \\
a_i b_{i-1} - a_{i+1} b_{i-2}, & j=i-1, \\
a_i b_i - a_{i-1} a_{i+1}, & j=i, \\
a_i b_{i+1} - a_{i-1} b_{i+2}, & j=i+1, \\
0, & j \geq i+2.
\end{cases}
\end{eqnarray}
\\

(\text{Case $2$}) Let $i$ be odd nummber.Now, $O_n$ satisfies the following inequality:
\begin{eqnarray}\label{bunpaihannteisnohanasi3}
b_{i-1} \leq a_i \leq b_{i+1}.
\end{eqnarray}
By $(\ref{bunpaihannteisnohanasi3})$, we have
\begin{eqnarray}\label{bunpaihannteisnohanasi4}
s \leq b_{1} \leq \cdots \leq b_{i-1} \leq a_i \leq b_{i+1} \leq \cdots b_{n} \leq t.
\end{eqnarray}
Hence, for $i \neq j$, we have $f_{a_i,b_j}=0$ by $(\ref{bunpaihannteisnohanasi4})$. On the other hand, for $i=j$, since $a_i$ and $b_i$ are incomparable, we have $f_{a_i,b_i}=a_i b_i - b_{i-1} b_{i+1}$ by $(\ref{bunpaihannteisnohanasi3})$. Hence, the polynomial $f_{a_i,b_j}$ is as follows:
\begin{eqnarray}\label{odd poly}
f_{a_i,b_j}=
\begin{cases}
a_i b_i - b_{i-1} b_{i+1}, & i=j, \\
0, & i \neq j.
\end{cases}
\end{eqnarray}

Therefore,  $R$ consists of $(\ref{even poly})$ and $(\ref{odd poly})$.\\

Next, we show that $R$ is Gr\"{o}bner basis of $I_{O_n}$ with respect to compatible monomial order $\prec$. Let $j$ and $r$ be non-negative integer. Let check that $S$-polynomials
\begin{eqnarray}
&S(f_{a_i,b_j},f_{a_k,b_{\ell}})& \text{for $i , k \equiv 0 (\bmod 2)$}, \nonumber \\
&S(f_{a_i,b_j},f_{a_k,b_{\ell}})& \text{for $i , k \equiv 1 (\bmod 2)$}, \nonumber \\
&S(f_{a_i,b_j},f_{a_k,b_{\ell}})& \text{for $i \equiv 1 (\bmod 2) , k \equiv 0 (\bmod 2)$} \nonumber
\end{eqnarray}
reduce to $0$ with respect to generators of $R$.\\

First, we check that $S(f_{a_i,b_j},f_{a_k,b_{\ell}})$ reduces to $0$ with respect to generators of $R$, where $i, k \equiv 0 \pmod 2$. It follows from $(\ref{even poly})$ that we have
\begin{eqnarray}
\initial_{\prec}(f_{a_i,b_j})=
\begin{cases}
0, & j \leq i-2, \\
a_i b_{i-1}, & j=i-1, \\
a_i b_i, & j=i, \\
a_i b_{i+1}, & j=i+1, \\
0, & j \geq i+2.
\end{cases}
\nonumber
\end{eqnarray}
From above equation, for each $i=k$ and $i \neq k$, it is necessary to consider the calculation of $S(f_{a_i,b_j},f_{a_k,b_{\ell}})$.\\

$(\text{The case $i=k$})$ Since a initial monomial of $f_{a_i,b_j}$ and $f_{a_i,b_{\ell}}$ are as follows:
\begin{eqnarray}
\initial_{\prec}(f_{a_i,b_j})=
\begin{cases}
0, & j \leq i-2, \\
a_i b_{i-1}, & j=i-1, \\
a_i b_i, & j=i, \\
a_i b_{i+1}, & j=i+1, \\
0, & j \geq i+2,
\end{cases} \quad
\initial_{\prec}(f_{a_i,b_{\ell}})=
\begin{cases}
0, & \ell \leq i-2, \\
a_i b_{i-1}, & \ell=i-1, \\
a_i b_i, & \ell=i, \\
a_i b_{i+1}, & \ell=i+1, \\
0, & \ell \geq i+2.
\end{cases}
\nonumber
\end{eqnarray}
Thus, for $j=\ell=i-1,i,i+1$, we have $S(f_{a_i,b_j},f_{a_i,b_{\ell}})=0$. Hence, we only have to check out that $S$-polynomial
\begin{eqnarray}
S(f_{a_i,b_j},f_{a_i,b_{\ell}})=
\begin{cases}\label{popi}
S(f_{a_i,b_{i-1}},f_{a_i,b_i}), & (j,\ell)=(i-1,i), \\
S(f_{a_i,b_{i-1}},f_{a_i,b_{i+1}}), & (j,\ell)=(i-1,i+1), \\ 
S(f_{a_i,b_i},f_{a_i,b_{i+1}}), & (j,\ell)=(i,i+1).
\end{cases}
\end{eqnarray}
reduces to $0$. Now, it follows from $(\ref{popi})$ that we have
\begin{eqnarray}
S(f_{a_i,b_j},f_{a_i,b_{\ell}})=
\begin{cases}
a_{i+1} f_{a_{i-1},b_{i-1}}, & (j,\ell)=(i-1,i), \\ 
b_{i+2} f_{a_{i-1},b_{i-1}} - b_{i-2} f_{a_{i+1},b_{i+1}}, & (j,\ell)=(i-1,i+1), \\ 
- a_{i-1} f_{a_{i+1},b_{i+1}}, & (j,\ell)=(i,i+1).
\end{cases}
\nonumber
\end{eqnarray}
Hence, $S(f_{a_i,b_j},f_{a_i,b_{\ell}})$ reduces to $0$.\\

$(\text{The case $i \neq k$})$ Suppose $i < k$ and let $\varepsilon=k-i$. Then, a initial monomial of $f_{a_i,b_j}$ and  $f_{a_k,b_{\ell}}$ are as follows:
\begin{eqnarray}\label{8888888888888}
\initial_{\prec}(f_{a_i,b_j})=
\begin{cases}
0, & j \leq i-2, \\
a_i b_{i-1}, & j=i-1, \\
a_i b_i, & j=i, \\
a_i b_{i+1}, & j=i+1, \\
0, & j \geq i+2,
\end{cases} \quad
\initial_{\prec}(f_{a_k,b_{\ell}})=
\begin{cases}
0, & \ell \leq i +  \varepsilon -2, \\
a_{i+\varepsilon} b_{i+ \varepsilon-1}, & \ell=i+\varepsilon-1, \\
a_{i+\varepsilon} b_{i+ \varepsilon}, & \ell=i+ \varepsilon, \\
a_{i+\varepsilon} b_{i+ \varepsilon+1}, & \ell=i+\varepsilon+1, \\
0, & \ell \geq i+ \varepsilon+2.
\end{cases}
\nonumber
\end{eqnarray}
Then, we must consider that we look for $\varepsilon$ such that $j$ and $\ell$ satisfy $b_j=b_{\ell}$ in the above calculation result. Below, we consider the following cases:
\begin{eqnarray}
&\text{Case $2.1$}:&b_{i-1}=b_{i+ \varepsilon-1}, b_{i+ \varepsilon} , b_{i+ \varepsilon+1}, \nonumber \\
&\text{Case $2.2$}:&b_{i}=b_{i+ \varepsilon-1}, b_{i+ \varepsilon} , b_{i+ \varepsilon+1}, \nonumber \\
&\text{Case $2.3$}:&b_{i+1}=b_{i+ \varepsilon-1}, b_{i+ \varepsilon} , b_{i+ \varepsilon+1}. \nonumber
\end{eqnarray}

(\text{Case $2.1$}) In this case, since
\begin{eqnarray}
i-1=i+ \varepsilon-1, i+ \varepsilon, i+ \varepsilon+1, \nonumber
\end{eqnarray}
we have $\varepsilon=0,-1,-2$. Since $\varepsilon > 0$, then $\varepsilon=0,-1,-2$ can't satisfy Case $2.1$.\\

(\text{Case $2.2$}) In this case, since
\begin{eqnarray}\label{varepsil2}
i=i+ \varepsilon-1, i+ \varepsilon, i+ \varepsilon+1, \nonumber
\end{eqnarray}
we have $\varepsilon=-1,0,1$. Since $\varepsilon > 0$ and since it is even, then $\varepsilon=-1,0,1$ can't satisfy Case $2.2$.\\

(\text{Case $2.3$}) In this case, since
\begin{eqnarray}
i+1=i+ \varepsilon-1, i+ \varepsilon, i+ \varepsilon+1, \nonumber
\end{eqnarray}
we have $\varepsilon=0,1,2$. Since $\varepsilon > 0$ and since it is even, then $\varepsilon=2$ only satisfies Case $2.3$.\\

From three cases, at first, for $\varepsilon>2$, since $\initial_{\prec}(f_{a_i,b_j})$ and $\initial_{\prec}(f_{a_k,b_{\ell}})$ are relatively prime, $S(f_{a_i,b_j},f_{a_k,b_{\ell}})$ reducess to respect to $f_{a_i,b_j},f_{a_k,b_{\ell}}$. Next, for $\varepsilon=2$, we have
\begin{eqnarray}
\initial_{\prec}(f_{a_i,b_j})=
\begin{cases}
0, & j \leq i-2, \\
a_i b_{i-1}, & j=i-1, \\
a_i b_i, & j=i, \\
a_i b_{i+1}, & j=i+1, \\
0, & j \geq i+2,
\end{cases} \quad
\initial_{\prec}(f_{a_k,b_{\ell}})=
\begin{cases}
0, & \ell \leq i, \\
a_{i+2} b_{i+1}, & \ell=i+1, \\
a_{i+2} b_{i+2}, & \ell=i+2, \\
a_{i+2} b_{i+3}, & \ell=i+3, \\
0, & \ell \geq i+4.
\end{cases}
\nonumber
\end{eqnarray}
Hence, for 
\begin{eqnarray}
(j,\ell)=(i-1,i+1),(i-1,i+2),(i-1,i+3),(i,i+1),(i,i+2),(i,i+3),(i+1,i+2),(i+1,i+3), \nonumber
\end{eqnarray}
since $\initial_{\prec}(f_{a_i,b_j})$ and $\initial_{\prec}(f_{a_k,b_{\ell}})$ are relatively prime, $S(f_{a_i,b_j}, f_{a_k,b_{\ell}})$ reduces to $0$ with respect to $f_{a_i,b_j},f_{a_k,b_{\ell}}$. On the other hand, for $(j,\ell)=(i+1,i+1)$, since we have
\begin{eqnarray}
S(f_{a_i,b_{i+1}},f_{a_{i+2},b_{i+1}})=a_{i+3} f_{a_i,b_i} - a_{i-1} f_{a_{i+2},b_{i+2}},
\nonumber
\end{eqnarray}
$S(f_{a_i,b_{i+1}},f_{a_{i+2},b_{i+1}})$ reduces to $0$ with respect to  $f_{a_i,b_i}$, $f_{a_{i+2},b_{i+2}}$. Therefore, we checked that $S(f_{a_i,b_j},f_{a_k,b_{\ell}})$ reduces to $0$ with respect to generators of $R$.\\

Second, we check that $S(f_{a_i,b_j},f_{a_k,b_{\ell}})$ reduces to $0$ with respect to generators of $R$, where $i, k \equiv 1 \pmod 2$. For $i \neq j$ or $k \neq \ell$, it follows from $(\ref{odd poly})$ that  $S(f_{a_i,b_j},f_{a_k,b_{\ell}})$ reduces to $0$ with respect to $f_{a_i,b_j},f_{a_k,b_{\ell}}$. On the other hand, for $i=j$ and $k=\ell$, since $\initial_{\prec}(f_{a_i,b_i})=a_i b_i$ and $\initial_{\prec}(f_{a_k,b_k})=a_k b_k$, we have to consider the calculation of $S(f_{a_i,b_i},f_{a_k,b_k})$ for each $i=k$ and $i \neq k$.\\

$(\text{The case $i=k$})$ It follows from $i=k$ that we have $S(f_{a_i,b_i},f_{a_k,b_k})=0$. \\

$(\text{The case $i \neq k$})$ It follow from $i \neq k$ that $\initial_{\prec}(f_{a_i,b_i})$ and $\initial_{\prec}(f_{a_k,b_k})$ are relatively prime.  Hence, $S(f_{a_i,b_i},f_{a_k,b_k})$ reduces to $0$ with respect to $f_{a_i,b_i}$, $f_{a_k,b_k}$.\\ 

Therefore,  we checked that $S(f_{a_i,b_j},f_{a_k,b_{\ell}})$ reduces to $0$ with respect to generators of $R$.\\

Finally, we check that $S(f_{a_i,b_j},f_{a_k,b_{\ell}})$ reduces to $0$ with respect to generators of $R$, where $i \equiv 1 \pmod 2$ and $k \equiv 0 \pmod 2$. By $(\ref{even poly})$ and $(\ref{odd poly})$, we have
\begin{eqnarray}
\initial_{\prec}(f_{a_i,b_j})=
\begin{cases}
a_i b_i, & i=j, \\
0, & i \neq j,
\end{cases}\quad
\initial_{\prec}(f_{a_k,b_{\ell}})=
\begin{cases}
0, & \ell \leq k-2, \\
a_k b_{k-1}, & \ell=k-1, \\
a_k b_k, & \ell=k, \\
a_k b_{k+1}, & \ell=k+1, \\
0, & \ell \geq k+2.
\end{cases}
\nonumber
\end{eqnarray}
From above result, if $i \neq j$, then we have $S(f_{a_i,b_j},f_{a_k,b_{\ell}})=-f_{a_k,b_{\ell}}$. On the other hand, for $i=j$, we have
\begin{eqnarray}
S(f_{a_i,b_j},f_{a_k,b_{\ell}})=
\begin{cases}
- b_{k-2} f(a_k,b_k), & i=j=\ell=k-1, \\
- b_{k+2} f(a_k,b_k), & i=j=\ell=k+1.
\end{cases}
\nonumber
\end{eqnarray}
Hence,  we checked that $S(f_{a_i,b_j},f_{a_k,b_{\ell}})$ reduces to $0$ with respect to generators of $R$.\\

Therefore, we showed that $R$ is a Gr\"{o}bner basis of $I_{O_n}$ with respect to $\prec$. By [1, Theorem 6.17], $O_n$ is a distributive lattice. Hence, it follows from [1, Theorem 6.21] that $I_{O_n}$ is radical.
\end{proof}

\subsection{Some comments} We end this section with a few comments. By [1, Theorem 6.10 (Dedekind)], we can show that $O_{n}$ is a distributive lattice by using the Hasse diagram of its. However, it may be surprisingly difficult to write the process of proof in detail. Rather than that, we can consider that writing the process of proof is very easy by using [1, Theorem 6.17]. In short, we can use mathematical formulas to write the process of proof in detail.

\section{Topics related to special finite lattice}

In this section, we introduce topics related to a distributive lattice $O_ {n_1}$. Note that it has little to do with the gist of this paper.

\subsection{Number-theoretic characterization}

In this subsection, we introduce the relationship between $O_ {n_1}$ and number theory. By Theorem $\ref{theo3}$, a finite lattice $O_{n_1}$ is distributive lattice. On the other hand, it looks abstract as the structure of the set and it has a difficult shape. However, it is not. By the following theorem, we can see $O_{n_1}$ as number-theoretic finite lattice whose shape is very easy.

\begin{theo}\label{EEEEEE}\it Let $p$ and $q$ be prime number with $p \neq q$. For non-negative integer $k$, let 
\begin{eqnarray}
L_{p,q,k}=\bigcup_{r=1}^k C_{p,q,r} \nonumber
\end{eqnarray}
ordered by divisibility, where
\begin{eqnarray}
C_{p,q,1}=\{ 1 , p , q , p^2 , pq , p^2q \}, \quad C_{p,q,r}=\{ p^{r-1} q^r , p^r q^r , p^{r+1} q^r , p^{r+1} q^{r-1} \}, \quad r>1. \nonumber
\end{eqnarray}
Then, $O_{2k}$ is isomorphic to $L_{p,q,k}$.\\
\end{theo}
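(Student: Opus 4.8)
The plan is to realize $L_{p,q,k}$ as a set of lattice points and to write down an explicit order isomorphism onto it from $O_{2k}$. Since $p\neq q$ are primes, unique factorization gives $p^{x}q^{y}\mid p^{x'}q^{y'}$ if and only if $x\le x'$ and $y\le y'$, so the assignment $p^{x}q^{y}\mapsto(x,y)$ identifies $L_{p,q,k}$, ordered by divisibility, with a finite set $P\subset\mathbb{Z}_{\ge0}^{2}$ ordered componentwise. Writing out the layers, $C_{p,q,1}$ becomes $\{(0,0),(1,0),(0,1),(2,0),(1,1),(2,1)\}$ and, for $2\le r\le k$, $C_{p,q,r}$ becomes $\{(r-1,r),(r,r),(r+1,r),(r+1,r-1)\}$; in particular $|P|=6+4(k-1)=4k+2=|O_{2k}|$, which is the first consistency check.

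Next I would define $\phi\colon O_{2k}\to P$ by $\phi(s)=(0,0)$, $\phi(t)=(k+1,k)$, $\phi(a_i)=(\lfloor i/2\rfloor+1,\lfloor(i-1)/2\rfloor)$ for $1\le i\le 2k$, and $\phi(b_j)=(\lfloor j/2\rfloor,\lceil j/2\rceil)$ for $1\le j\le 2k$. A short parity computation shows that the images of the $a_i$ are precisely the points of $P$ with $x-y\in\{1,2\}$, the images of the $b_j$ are precisely those with $x-y\in\{-1,0\}$ other than $(0,0)$, and $s,t$ supply the two remaining points. This separation makes $\phi$ injective, and by the cardinality count it is then a bijection onto $P$.

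To see that $\phi$ is an order isomorphism I would verify both implications. For $u\le v\Rightarrow\phi(u)\le\phi(v)$ it suffices to check a generating set of the order of $O_{2k}$: by the proof of Theorem \ref{theo3} this order is the transitive closure of the two chains $s<a_1<\cdots<a_{2k}<t$ and $s<b_1<\cdots<b_{2k}<t$ together with the cross relations $a_{2m-1}<b_{2m}<a_{2m+1}$ for $1\le m\le k$ (with $a_{2k+1}:=t$); each chain step raises exactly one coordinate by $1$, while $\phi(a_{2m-1})=(m,m-1)\le(m,m)=\phi(b_{2m})$ and $\phi(b_{2m})=(m,m)\le(m+1,m)=\phi(a_{2m+1})$, so transitivity of the componentwise order finishes this direction. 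For the converse, given $\phi(u)\le\phi(v)$ I would exhibit an explicit chain; the cases $u=s$ and $v=t$ are trivial, and within one chain it follows from $i\mapsto\phi(a_i)$ and $j\mapsto\phi(b_j)$ being strictly increasing. The essential point is that the first coordinate controls the $a$-to-$b$ order and the second the $b$-to-$a$ order: $\phi(a_i)\le\phi(b_j)$ forces $\lfloor i/2\rfloor+1\le\lfloor j/2\rfloor$, so taking $m=\lfloor i/2\rfloor+1$ gives $i\le 2m-1$ and $2m\le j$, whence $a_i\le a_{2m-1}<b_{2m}\le b_j$; symmetrically $\phi(b_j)\le\phi(a_i)$ forces $\lceil j/2\rceil\le\lfloor(i-1)/2\rfloor$, so with $m=\lceil j/2\rceil$ one gets $b_j\le b_{2m}<a_{2m+1}\le a_i$. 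Thus $\phi$ is an order isomorphism, and since $O_{2k}$ is a lattice it is a lattice isomorphism $O_{2k}\cong L_{p,q,k}$.

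I expect the main obstacle to be organizational rather than conceptual: keeping the floor/ceiling formulas and the odd/even parities straight, and—more importantly—making sure the comparability description of $O_{2k}$ imported from the proof of Theorem \ref{theo3} is genuinely complete, so that the explicit chain constructions in the converse direction exhaust every pair $u\le v$ and no unrecorded cross-relation is overlooked.
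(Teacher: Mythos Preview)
Your proposal is correct and follows essentially the same route as the paper: both construct the very same explicit bijection between $O_{2k}$ and $L_{p,q,k}$ and verify that it and its inverse preserve order. Your packaging is somewhat cleaner—using the exponent coordinates $(x,y)$ and uniform floor/ceiling formulas instead of the paper's case-by-case definitions, and proving order reflection by exhibiting chains rather than by separately checking that the inverse map is order-preserving—but the underlying argument is the same.
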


\begin{proof}At first, we prepare some things necessary to prove Theorem $\ref{EEEEEE}$.\\

Let $L_{p,q,k}=L_k$. Let define the map $h_{1,k}$ : $O_{2k} \to L_k$ by setting
\begin{eqnarray}
&&h_{1,k}(s)=1, \quad h_{1,k}(a_1)=p, \nonumber \\
&&h_{1,k}(a_{2r})=p^{r+1} q^{r-1}, \quad h_{1,k}(b_{2r-1})=p^{r-1} q^r \quad \text{for $r=1,\cdots,k$}, \nonumber \\
&&h_{1,k}(a_{2r+1})=h_{1,k}(a_{2r}) \join h_{1,k}(b_{2r}), \quad h_{1,k}(b_{2r})=h_{1,k}(a_{2r-1}) \join h_{1,k}(b_{2r-1}) \quad \text{for $r=1,\cdots,k$}. \nonumber
\end{eqnarray}
We define the map $h_{2,k}$ : $L_k \to O_{2k}$ by setting
\begin{eqnarray}
&&h_{2,k}(1)=s, \quad h_{2,k}(p)=a_1, \nonumber \\
&&h_{2,k}(p^{r+1} q^{r-1})=a_{2r}, \quad h_{2,k}(p^{r-1} q^r)=b_{2r-1} \quad \text{for $r=1,\cdots,k$}, \nonumber \\
&&h_{2,k}(p^{r+1} q^r)=a_{2r+1}, \quad h_{2,k}(p^r q^r)=b_{2r} \quad \text{for $r=1,\cdots,k$}. \nonumber
\end{eqnarray}

Now, we prove Theorem $\ref{EEEEEE}$. First, we show that $h_{1,k}$ is bijective. Since
\begin{eqnarray}
L_k
&=&\{ 1 , p , q , p^2 , pq , p^2q \} \cup \biggr ( \bigcup_{r=2}^k C_{p,q,r} \biggr ) \nonumber \\
&=&\{ h_{1,k}(s) , h_{1,k}(a_1) , f(b_1) , f(a_2) , f(b_2)=f(a_1) \join f(b_1) , f(a_3)=f(a_2) \join f(b_2) \} \cup \biggr ( \bigcup_{r=2}^k C_{p,q,r} \biggr ), \nonumber
\end{eqnarray}
the mapping $h_{1,k}$ is bijective for $k=1$. On the other hand, in the case $k>1$, since
\begin{eqnarray}
&& C_{p,q,r} \nonumber \\
&=&\{ h_{1,k}(a_{2r}) , h_{1,k}(b_{2r-1}) , h_{1,k}(b_{2r})=h_{1,k}(a_{2r-1}) \join h_{1,k}(b_{2r-1}) , h_{1,k}(a_{2r+1})=h_{1,k}(a_{2r}) \join h_{1,k}(b_{2r}) \} \quad \text{for $r=2,\cdots,k$}, \nonumber
\end{eqnarray}
the set $C_{p,q,r}$ can be described inductively by $\{ h_{1,k}(s) , h_{1,k}(a_1) , h_{1,k}(b_1) , h_{1,k}(a_2) , h_{1,k}(b_2) , h_{1,k}(a_3) \}$ and $C_{p,q,r'}$, where $1 \leq r' \leq r-1$. Hence, for $k>1$, $h_{1,k}$ is bijective. Therefore, $h_{1,k}$ is bijective.\\

Second, we show that $h_{1,k}$ is order-preserving. By definition of $h_{1,k}$, it follows from a subset $\{a_i\}_{i=1}^{2k}$ of $O_{2k}$ that we have
\begin{eqnarray}
h_{1,k}(a_1)=p < p^2=h_{1,k}(a_2), \quad h_{1,k}(a_2)=p^2<h_{1,k}(a_3)=p^2q, \nonumber \\
\cdots , h_{1,k}(a_{2k-1})=p^{k} q^{k-1} < p^{k+1} q^{k-1}=h_{1,k}(a_{2k}). \nonumber
\end{eqnarray}
Hence, $h_{1,k}(a_i) < h_{1,k}(a_{i+1})(i=1,2,\cdots,2k-1)$. Also, it follows from a subset $\{b_i\}_{i=1}^{2k}$ that we get
\begin{eqnarray}
h_{1,k}(b_1)=q < pq=h_{1,k}(b_2), \quad h_{1,k}(b_2)=pq <h_{1,k}(b_3)=p q^2, \nonumber \\
\cdots , h_{1,k}(b_{2k-1})=p^{k-1} q^k < p^k q^k=h_{1,k}(b_{2k}). \nonumber
\end{eqnarray}
Hence, for $i=1,\cdots,2k-1$, we have $h_{1,k}(b_i) < h_{1,k}(b_{i+1})$. On the other hand, $L_k$ satisfies the following inequality:
\begin{eqnarray}
&&h_{1,k}(a_1)=p < pq=h_{1,k}(b_2), \quad h_{1,k}(b_2)=pq < p^2 q=h_{1,k}(a_3), \nonumber \\
&&\cdots , h_{1,k}(a_{r})=p^r q^{r-1} < p^r q^r=h_{1,k}(b_{r+1}), \quad h_{1,k}(b_{r+1})=p^r q^r  < p^{r+1} q^r=h_{1,k}(a_{r+2}), \nonumber \\
&&\cdots, h_{1,k}(a_{2k-1})=p^k q^{k-1} < p^k q^k=h_{1,k}(b_{2k}), \quad h_{1,k}(b_{2k})=p^k q^k < p^{k+1} q^k=h_{1,k}(t) \quad \text{for $r=1,3,\cdots,2k-1$} \nonumber
\end{eqnarray}
Hence, we have $h_{1,k}(a_r) < h_{1,k}(b_{r+1}) < h_{1,k}(a_{r+2})$ for $i=1,3,\cdots,2k-1$. Thus, for $a$ and $b$ belonging to $O_{2k}$ with $a < b$, we have $h_{1,k}(a) < h_{1,k}(b)$. Therefore, we showed that $h_{1,k}$ is order-preserving.\\

Third, we show that $h_{2,k}$ is the inverse mapping of $h_{1,k}$.  Let $h_{21,k}$ be the composite mapping $h_{2,k} \circ h_{1,k}$ and let $h_{12,k}$ the composite mapping $h_{1,k} \circ h_{2,k}$. For an arbitrary element belonging to $O_{2k}$, we have the following result:
\begin{eqnarray}
h_{21,k}(s)&=&h_{2,k}(h_{1,k}(s))=h_{2,k}(1)=s, \nonumber \\
h_{21,k}(a_1)&=&h_{2,k}(h_{1,k}(a_1))=h_{2,k}(p)=a_1, \nonumber \\
h_{21,k}(a_{2r})&=&h_{2,k}(h_{1,k}(a_{2r}))=h_{2,k}(p^{r+1} q^{r-1})=a_{2r} \quad \text{for $r=1,2,\cdots,k$}, \nonumber \\
h_{21,k}(a_{2r+1})&=&h_{2,k}(h_{1,k}(a_{2r+1}))=h_{2,k}(p^{r+1} q^r)=a_{2r+1} \quad \text{for $r=1,2,\cdots,k$}, \nonumber \\
h_{21,k}(b_{2r-1})&=&h_{2,k}(h_{1,k}(b_{2r-1}))=h_{2,k}(p^{r-1} q^r)=b_{2r-1} \quad \text{for $r=1,2,\cdots,k$}, \nonumber \\
h_{21,k}(b_{2r})&=&h_{2,k}(h_{1,k}(b_{2r}))=h_{2,k}(p^r q^r)=b_{2r} \quad \text{for $r=1,2,\cdots,k$}. \nonumber
\end{eqnarray}
Hence, we have $h_{21,k}=\text{id}_{O_{2k}}$. On the other hand, for an arbitrary element belonging to $L_{k}$, we have the following result:
\begin{eqnarray}
h_{12,k}(1)&=&h_{1,k}(h_{2,k}(1))=h_{1,k}(s)=1, \nonumber \\
h_{12,k}(p)&=&h_{1,k}(h_{2,k}(p))=h_{1,k}(a_1)=p, \nonumber \\
h_{12,k}(p^{r+1} q^{r-1})
&=&h_{1,k}(h_{2,k}(p^{r+1} q^{r-1}))=h_{1,k}(a_{2r})=p^{r+1} q^{r-1}, \quad \text{for $r=1,2,\cdots,k$}, \nonumber \\
h_{12,k}(p^{r-1} q^r)
&=&h_{1,k}(h_{2,k}(p^{r-1} q^r))=h_{1,k}(b_{2r-1})=p^{r+1} q^{r-1} \quad \text{for $r=1,2,\cdots,k$}, \nonumber \\
h_{12,k}(p^{r+1} q^r)
&=&h_{1,k}(h_{2,k}(p^{r+1} q^r))=h_{1,k}(a_{2r+1})=p^{r+1} q^r \quad \text{for $r=1,2,\cdots,k$}, \nonumber \\
h_{12,k}(p^r q^r)
&=&h_{1,k}(h_{2,k}(p^r q^r))=h_{1,k}(b_{2r})=p^r q^r \quad \text{for $r=1,2,\cdots,k$}. \nonumber
\end{eqnarray}
Hence, we have $h_{12,k}=\text{id}_{L_k}$. Therefore, $h_{2,k}$ is the inverse mapping of $h_{1,k}$.\\

Finally, we show that $h_{2,k}$ is order-preserving. Since 
\begin{eqnarray}
s < a_1 < a_2 < \cdots < a_{2k} < t , \quad s < b_1 < b_2 < \cdots < b_{2k} < t \nonumber
\end{eqnarray}
in $O_{2k}$, we have
\begin{eqnarray}
&& h_{2,k}(1) < h_{2,k}(p), \nonumber \\
&& h_{2,k}(p^r q^{r-1}) < h_{2,k}(p^{r+1} q^{r-1}) < h_{2,k}(p^{r+1} q^r) \quad \text{for $r=1,2,\cdots,k$}, \nonumber \\
&& h_{2,k}(p^{r-1} q^{r-1}) < h_{2,k}(p^{r-1} q^r) < h_{2,k}(p^r q^r) \quad \text{for $r=1,2,\cdots,k$}. \nonumber
\end{eqnarray}
On the other hand, for $r=1,3,\cdots, 2k-1$, since $a_r < b_{r+1} < a_{r+2}$, we have
\begin{eqnarray}
h_{2,k}(p^r q^{r-1}) < h_{2,k}(p^r q^r) < h_{2,k}(p^{r+1} q^r). \nonumber
\end{eqnarray}
Hence, $h_{2,k}$ is order-preserving. Therefore, $O_{2k}$ is isomorphic to $L_k$.
\end{proof}

\subsection{Gorenstein ring}

In this subsection, we give a non-trivial answer to the question of whether the Hibi ring $R_K[O_{n_1}]=K[O_{n_1}]/I_{O_{n_1}}$ is Gorenstein. Let $n_1=n$. Let $P_{n}$ denote the subposet of $O_{n}$ consisting of all join-irreducible elements of $O_{n}$. By [1, Theorem 6.4 (Birkhoff)], we have $O_{n}=J(P_n)$. Then, we obtain the following theorem.

\begin{theo}\label{6.3}\it For $n \geq 4$, the Hibi ring $R_K[O_{n}]$ is not Gorenstein.
\end{theo}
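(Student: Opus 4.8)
The plan is to use the characterization of the Gorenstein property for Hibi rings in terms of the underlying poset. By Theorem \ref{theo3} and Theorem \ref{EEEEEE}, the lattice $O_n$ is distributive, so by Birkhoff's theorem $O_n = J(P_n)$ for the subposet $P_n$ of join-irreducible elements. The Hibi ring $R_K[O_n] = K[O_n]/I_{O_n}$ is therefore the Hibi ring associated to the finite poset $P_n$. By a well-known theorem of Hibi (see [1, Corollary 10.1.7] or the original characterization), the Hibi ring of a poset $P$ is Gorenstein if and only if $P$ is \emph{pure}, that is, all maximal chains of $P$ have the same length. My strategy is thus to reduce the Gorenstein question entirely to the combinatorial question of whether $P_n$ is pure, and then to exhibit two maximal chains in $P_n$ of different lengths when $n \geq 4$.

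First I would identify the join-irreducible elements of $O_n$ explicitly. An element of a finite distributive lattice is join-irreducible exactly when it covers precisely one element. Using the explicit cover relations of $O_n$ worked out in the proof of Theorem \ref{theo3} (the chains $s < a_1 < \cdots < a_n < t$ and $s < b_1 < \cdots < b_n < t$ together with the extra relations $a_i < b_{i+1} < a_{i+2}$ for odd $i$), I would determine which of the $a_i$ and $b_i$ cover a unique element. The elements that are joins of two incomparable elements (these appear explicitly in the even-index formulas $(\ref{even poly})$, e.g.\ $a_{i+1} = a_i \join b_i$ and $b_{i+2} = a_i \join b_{i+1}$) are precisely the join-reducible ones and must be discarded; what remains is the poset $P_n$.

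Next I would describe the Hasse diagram of $P_n$ and locate its maximal elements and the chains leading to them. The key point is that, because $O_n$ is built from two interleaved chains that are spliced together only at the odd-index rungs via $a_i < b_{i+1} < a_{i+2}$, the poset $P_n$ of join-irreducibles is \emph{not} a single chain but branches; the two sides $\{a_i\}$ and $\{b_i\}$ contribute join-irreducibles at different heights. I would then exhibit one maximal chain of $P_n$ passing through the $a$-side and another passing through the $b$-side (or terminating at a join-irreducible that is maximal for a different reason) whose lengths differ once $n \geq 4$. Concretely, the number-theoretic model $L_{p,q,k}$ from Theorem \ref{EEEEEE} is the cleanest place to read this off: the join-irreducibles are the prime-power elements, and one checks that the longest chain and a shorter maximal chain of prime-power divisors have different lengths precisely when $n = 2k \geq 4$, i.e.\ $k \geq 2$. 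For $n = 4$ this is a small finite check, and for larger $n$ the discrepancy only grows.

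The main obstacle I expect is the bookkeeping in the second step: correctly determining $P_n$ requires carefully distinguishing join-irreducible from join-reducible elements in $O_n$, and the interleaving relations make this delicate near the ``seams'' $a_i, b_{i+1}, a_{i+2}$. Once $P_n$ is pinned down, verifying non-purity is a routine comparison of two explicit chain lengths, so the difficulty is entirely front-loaded into the combinatorial identification of $P_n$ rather than in any ring-theoretic computation. I would also need to confirm the boundary case $n = 4$ by hand to ensure the purity actually fails there (and does not fail for $n < 4$, which is consistent with the hypothesis $n \geq 4$), since the theorem is stated with this sharp threshold.
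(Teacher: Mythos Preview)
Your strategy is the paper's: write $O_n = J(P_n)$ via Birkhoff, invoke Hibi's criterion (Gorenstein $\Leftrightarrow$ $P_n$ pure), and show $P_n$ fails to be pure. The paper executes this by computing $P_4 = \{a_1,a_2,a_4,b_1,b_3\}$ explicitly, observing that $a_4$ covers $b_1$ in $P_4$ (because $b_1 < b_2 < a_3 < a_4$ in $O_4$ while $b_2,a_3 \notin P_4$) whereas $a_1 \lessdot a_2 \lessdot a_4$ is a saturated chain of length $2$ from the other minimal element; the resulting rank-function contradiction sits unchanged inside $P_n$ for all $n \geq 4$.

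One correction to your side remark: in $L_{p,q,k}$ the join-irreducibles are \emph{not} exactly the prime-power elements. That description is valid in the full divisor lattice of $p^a q^b$, but $L_{p,q,k}$ is only a proper sublattice of it, and for instance $p^3 q = h_{1,2}(a_4)$ and $pq^2 = h_{1,2}(b_3)$ are join-irreducible in $L_{p,q,2}$ despite not being prime powers. Note also that Theorem~\ref{EEEEEE} only realizes $O_{2k}$, so the number-theoretic detour would leave odd $n$ untreated. Your primary plan---reading off the join-irreducibles directly from the cover relations of $O_n$---avoids both issues and leads straight to the paper's argument.
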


\begin{proof}Suppose that 
\begin{eqnarray}
P_4=\{ a_1 , a_2 , a_4 , b_1 , b_3 \} \nonumber
\end{eqnarray}
is pure. Then, it follows from [1, Lemma 6.12] that $P_4$ possesses a rank function $\sigma$. Since $a_4$ covers $a_2$ and $a_2$ covers $a_1$ in $P_4$, we have
\begin{eqnarray}\label{p41}
\sigma(a_4)=\sigma(a_2)+1=\sigma(a_1)+2=2.
\end{eqnarray}
On the other hand, since $b_1 < b_2 < a_3 < a_4$ in $O_{4}$, $a_4$ covers $b_1$ in $P_4$. Thus, we have
\begin{eqnarray}\label{p42}
\sigma(a_4)=\sigma(b_1)+1=1.
\end{eqnarray}

Hence, $(\ref{p41})$ and $(\ref{p42})$ contradict the uniqueness of $\sigma$. Therefore, $P_4$ is not pure.

For $n \geq 4$, $P_4$ is a subposet of $P_n$. Since $P_4$ is not pure, $P_n$ is not pure for $n \geq 4$. Hence, it follows from [4, Theorem 1.29] that $R_K[O_{n}]$ is not Gorenstein for $n \geq 4$.
\end{proof}

\vspace{-5pt}

\small

\end{document}